\newtheorem{thm}{Theorem}[section]
\newtheorem {conj}{Conjecture}[section]
\newtheorem{lm}{Lemma}[section]
\newtheorem{rmk}{Remark}[section]
\newtheorem{deff}{Definition}[section]
\theoremstyle{definition}
\theoremstyle{remark}
\numberwithin{equation}{section}
\DeclareMathOperator{\Conv}{Conv}
\newcommand{\eps}{\varepsilon}
\newcommand{\I}{\mathcal{I}}
\newcommand{\M}{\mathcal{M}}
\newcommand{\E}{\mathbb{E}}
\newcommand{\BE}{\mathbf{E}}
\newcommand{\BB}{\mathbf{B}}
\newcommand{\BX}{\mathbf{X}}
\newcommand{\bx}{\mathbf{x}}
\newcommand{\ba}{\mathbf{a}}
\newcommand{\bc}{\mathbf{c}}
\newcommand{\N}{\mathbb{N}}
\newcommand{\PP}{\mathbb{P}}
\newcommand{\R}{\mathbb{R}}
\newcommand{\Lom}{\mathcal{L}}
\newcommand{\wtd}{\widetilde}
\numberwithin{equation}{section}
\newcommand{\bed}{\begin{displaymath}}
\newcommand{\eed}{\end{displaymath}}
\newcommand{\bea}{\bed\begin{array}{rl}}
\newcommand{\eea}{\end{array}\eed}
\newcommand{\barray}{\begin{array}{ll}}
\newcommand{\earray}{\end{array}}
\newcommand{\1}{\boldsymbol{1}}
\newcommand{\bdelta}{\boldsymbol{\delta}}
\def\hat{\widehat}
\def\a.s{\text{\;a.s.\;}}
\title[Stochastic food chains]{Stochastic Lotka-Volterra food chains}
\author[A. Hening]{Alexandru Hening }
\address{Department of Mathematics \\
 Imperial College London\\
 South Kensington Campus \\
 London SW7 2AZ \\
 United Kingdom}
 \email{a.hening@imperial.ac.uk}
\author[D. Nguyen]{Dang H. Nguyen }
\thanks{D. Nguyen was in part supported by
 the National Science Foundation
under grant DMS-1207667.}
\address{Department of Mathematics \\
 Wayne State University\\
 Detroit, MI 48202 \\
 United States}
 \email{dangnh.maths@gmail.com}
\keywords{Lotka-Volterra; persistence; extinction; Lyapunov exponent; stochastic environment; predator-prey; degenerate noise}
\subjclass[2010]{92D25, 37H15, 60H10, 60J60}
\begin{document}
\maketitle

\begin{abstract}
We study the persistence and extinction of species in a simple food chain that is modelled by a Lotka-Volterra system with environmental stochasticity. There exist sharp results for deterministic Lotka-Volterra systems in the literature but few for their stochastic counterparts. The food chain we analyze consists of one prey and $n-1$ predators. The $j$th predator eats the $j-1$th species and is eaten by the $j+1$th predator; this way each species only interacts with at most two other species - the ones that are immediately above or below it in the trophic chain.
We show that one can classify, based on an explicit quantity depending on the interaction coefficients of the system, which species go extinct and which converge to their unique invariant probability measure. Our work can be seen as a natural extension of the deterministic results of Gard and Hallam '79 to a stochastic setting.

As one consequence we show that environmental stochasticity makes species more likely to go extinct. However, if the environmental fluctuations are small, persistence in the deterministic setting is preserved in the stochastic system. Our analysis also shows that the addition of a new apex predator makes, as expected, the different species more prone to extinction.

Another novelty of our analysis is the fact that we can describe the behavior the system when the noise is degenerate. This is relevant because of the possibility of strong correlations between the effects of the environment on the different species.
\end{abstract}
\tableofcontents

\section{Introduction}
A fundamental problem in ecology is to determine which species go extinct and which persist in a given ecosystem. The complex interactions of the species of the community are sometimes depicted by using food webs. The food web shows the different interaction paths that connect various animals and plants. In general, food webs can be seen as unions of food chains - diagrams that follow one path of interactions. Since species interact in complex ways, food chains and food webs are simplified models of the real world, or caricatures of nature. Nevertheless, one can explain some key properties of an ecosystem by studying these simplified models. To quote \cite{P82}

\textit{``Like caricatures, though their representation of nature is distorted, there is enough truth to permit a study of some of the features they represent.''}

 One well-known model for the interaction of a predator and its prey, the Lotka-Volterra model, has been developed by \cite{L25} and \cite{V28}. Even though the model of \cite{L25} and \cite{V28} is for one predator and one prey one can easily extend it to food chains of any length.
 Many of the food chain models studied in the literature are deterministic and of Lotka-Volterra type. \cite{GH79} give criteria for persistence and extinction for Lotka-Volterra food chains while the global stability of nonnegative equilibrium points is studied by \cite{S79, H79}. More general deterministic food-chains have been analyzed by \cite{G80, FS85}.

In this paper we analyse simple food chains of arbitrary length. We assume that there is only one species at each trophic level and that each species eats only the one on the adjacent lower trophic level. Furthermore, the ecosystem is supposed to have no immigration or emigration.

Our starting point will be the deterministic Lotka-Volterra system (which has been studied by \cite{GH79})
\begin{equation}\label{e:det}
\begin{split}
dx_1(t) &= x_1(t)(a_{10} -a_{11}x_1(t) - a_{12}x_2(t))\,dt\\
dx_2(t) &= x_2(t)(-a_{20} + a_{21}x_1(t) - a_{23}x_3(t))\,dt\\
&\mathrel{\makebox[\widthof{=}]{\vdots}} \\
dx_{n-1}(t) &= x_{n-1}(t)(-a_{n-1,0}+a_{n-1,n-2}x_{n-2}(t)  - a_{n-1,n}x_n)\,dt\\
dx_n(t) &= x_n(t)(-a_{n0} + a_{n,n-1}x_{n-1}(t))\,dt.
\end{split}
\end{equation}

The quantities $(x_1(t),\dots,x_n(t))$ represent the densities of the $n$ species at time $t\geq 0$.
In this model $x_1$ describes a prey species, which is at the bottom of the food chain. The next $n-1$ species are predators. Species $1$ has a per-capita growth rate $a_{10}>0$ and its members compete for resources according to the intra-competition rate $a_{11}>0$. Predator species $j$ has a death rate $-a_{j0}<0$, preys upon species $j-1$ at rate $a_{j,j-1}>0$ and is preyed upon by predator $j+1$ at rate $a_{j,j+1}>0$. The last species, $x_n$, is considered to be the apex predator of the food chain.

\begin{rmk}
We note that the above system ignores the intraspecies competition between predators of the same species. The analysis for the setting with intraspecies competituon is different and will be the subject of the future paper \cite{HN17b}.
\end{rmk}

In the deterministic setting one says that the system \eqref{e:det} is \textit{persistent} if each solution of $\bx(t) = (x_1(t),\dots,x_n(t))$ with $\bx(0)\in \R_+^{n,\circ}:=\{(y_1,\dots,y_n): y_i>0, i=1,\dots,n\}$ satisfies
\[
\limsup_{t\to\infty} x_i(t) >0, i=1,\dots,n.
\]
Species $i$ goes \textit{extinct} if
\[
\lim_{t\to \infty} x_i(t) = 0.
\]

\cite{GH79} are able to prove using ODE and dynamical systems techniques that the persistence or extinction of \eqref{e:det} can be determined using a single parameter which depends on the interaction coefficients $(a_{ij})$. Define

\begin{equation}\label{e:kappa}
\begin{split}
\kappa(n)=\kappa &= a_{10} - \frac{a_{11}}{a_{21}}\left[a_{20}+\sum_{j=2}^k\left(\prod_{i=2}^j\frac{a_{2i-2,2i-1}}{a_{2i,2i-1}}\right)a_{2j,0}\right] - \sum_{j=1}^l\left(\prod_{i=1}^j \frac{a_{2i-1,2i}}{a_{2i+1,2i}}\right)a_{2j+1,0}
\end{split}
\end{equation}
where
\[
k =
\left\{
	\begin{array}{ll}
		n/2, & \mbox{if } n ~\text{even} \\
		(n-1)/2, & \mbox{if } n ~\text{odd}
	\end{array}
\right.
\]
and
\[
l =
\left\{
	\begin{array}{ll}
		n/2-1, & \mbox{if } n ~\text{even} \\
		(n-1)/2, & \mbox{if } n ~\text{odd}.
	\end{array}
\right.
\]
The following theorem is one of the main results of \cite{GH79}.
\begin{thm}\label{t:det}
The food chain modelled by \eqref{e:det} is persistent when $\kappa(n)>0$; it is not persistent (that is, some species go extinct) if $\kappa(n)<0.$
\end{thm}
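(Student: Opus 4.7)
The plan is to prove each direction of the dichotomy with a tailor-made Volterra-type Lyapunov function whose coefficients are obtained by a telescoping recursion along the chain, and whose constant term collapses exactly to the quantity $\kappa$ defined in \eqref{e:kappa}.

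\emph{Persistence ($\kappa>0$).} First I would show that the unique interior equilibrium $\bx^{*}=(x_{1}^{*},\ldots,x_{n}^{*})$ of \eqref{e:det} has every component strictly positive precisely when $\kappa>0$, by solving the equilibrium equations top--down from $x_{n-1}^{*}=a_{n0}/a_{n,n-1}$ and matching against the prey equation $a_{10}=a_{11}x_{1}^{*}+a_{12}x_{2}^{*}$. With $c_{1}=1$ and the recursion $c_{i}=c_{i-1}\,a_{i-1,i}/a_{i,i-1}$ for $i\geq 2$, introduce
\[
V(\bx)=\sum_{i=1}^{n}c_{i}\Bigl(x_{i}-x_{i}^{*}-x_{i}^{*}\log\tfrac{x_{i}}{x_{i}^{*}}\Bigr).
\]
Expanding the brackets of \eqref{e:det} about $\bx^{*}$ using the equilibrium identities and exploiting the choice of $c_{i}$ to cancel every cross term $(x_{i-1}-x_{i-1}^{*})(x_{i}-x_{i}^{*})$ yields $\dot V=-c_{1}a_{11}(x_{1}-x_{1}^{*})^{2}\leq 0$. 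Each summand of $V$ is non-negative and blows up as $x_{i}\to 0^{+}$ or $x_{i}\to\infty$, so $V$ is proper on $\R_{+}^{n,\circ}$, giving global boundedness and a positive distance to the extinction boundary. LaSalle's invariance principle then reduces the problem to the largest invariant subset of $\{x_{1}\equiv x_{1}^{*}\}$: on such a set $\dot x_{1}\equiv 0$ forces $x_{2}\equiv x_{2}^{*}$, then $x_{3}\equiv x_{3}^{*}$, and inductively this set coincides with $\{\bx^{*}\}$. Hence $\bx(t)\to\bx^{*}$ and the system is persistent.

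\emph{Extinction ($\kappa<0$).} Now the equilibrium has a negative coordinate, so the previous function is unavailable; I use instead $W(\bx)=\sum c_{i}\log x_{i}$, with the two-step recursion $c_{j+1}=c_{j-1}\,a_{j-1,j}/a_{j+1,j}$ for $2\leq j\leq n-1$ coupled at the base by $c_{2}=c_{1}a_{11}/a_{21}$. These are exactly the weights that zero out the coefficient of every $x_{j}$ in $\dot W=\sum c_{i}\dot x_{i}/x_{i}$ except that of $x_{n}$; unwinding the products
\[
c_{2j}=\tfrac{a_{11}}{a_{21}}\prod_{i=2}^{j}\tfrac{a_{2i-2,2i-1}}{a_{2i,2i-1}},\qquad c_{2j+1}=\prod_{i=1}^{j}\tfrac{a_{2i-1,2i}}{a_{2i+1,2i}},
\]
and comparing against the even/odd splitting in \eqref{e:kappa} identifies the constant term of $\dot W$ with $\kappa$. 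Consequently $\dot W=\kappa-c_{n-1}a_{n-1,n}\,x_{n}\leq\kappa<0$, and integrating yields $W(\bx(t))\leq W(\bx(0))+\kappa t\to-\infty$. Combined with the a priori upper bounds on each $x_{i}$ obtained by cascading $\limsup x_{1}\leq a_{10}/a_{11}$ up the chain, this forces $\liminf_{t\to\infty}\min_{i}x_{i}(t)=0$, contradicting persistence.

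The main obstacle is the combinatorial identity matching the constant term of $\dot W$ with the precise expression \eqref{e:kappa}; this bookkeeping has to be carried out separately for $n$ even and $n$ odd to account for the two formulas for $k$ and $l$. A secondary subtlety --- upgrading $\liminf x_{i}=0$ for some $i$ to the honest statement $\lim x_{i}=0$ demanded by the extinction definition preceding the theorem --- is settled by a short cascade argument: once the density of a prey species stays small for long, the predator immediately above it is starved and decays exponentially, so extinctions propagate up the chain.
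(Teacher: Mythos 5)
The paper itself does not prove Theorem \ref{t:det}; it is quoted from \cite{GH79}, so there is no in-paper argument to compare against. Judged on its own merits, your persistence direction is essentially the standard Lotka--Volterra proof and is sound: the weights $c_i=c_{i-1}a_{i-1,i}/a_{i,i-1}$ do cancel every cross term in $\dot V$, leaving $\dot V=-a_{11}(x_1-x_1^*)^2$; $V$ is proper on $\R_+^{n,\circ}$ so trajectories are precompact; and the LaSalle cascade on $\{x_1\equiv x_1^*\}$ pins down $\bx^*$. You do need to check that $\kappa(n)>0$ is equivalent to positivity of \emph{all} $n$ coordinates of the interior equilibrium (not just $x_n^*$); this is true for this nearest-neighbour chain because the odd-indexed coordinates are automatically positive and the even-indexed ones are nested, as the $n=3,4$ cases illustrate, but it deserves an explicit inductive verification since the identity $\kappa(n)\propto x_n^*$ alone (cf.\ Lemma \ref{lm4.1} for the stochastic analogue) is not a priori enough.

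The extinction direction has two problems, one cosmetic and one substantive. Cosmetically, "cascading $\limsup x_1\leq a_{10}/a_{11}$ up the chain" does not give upper bounds on $x_2,\dots,x_n$: with $x_1$ bounded one only gets $\dot x_2\leq x_2(-a_{20}+a_{21}\sup x_1)$, which permits exponential growth. The correct device is the linear Lyapunov function $U=\sum c_i x_i$ (the one the paper uses in Lemma \ref{lm3.1}), whose drift is $A_0-A_1 U$, giving uniform ultimate boundedness. Substantively, $W(\bx(t))\leq W(\bx(0))+\kappa t\to-\infty$ together with boundedness yields $\min_i\log x_i(t)\leq W(t)/\sum c_i\to-\infty$, i.e.\ $\min_i x_i(t)\to 0$, equivalently that the $\omega$-limit set lies in $\partial\R_+^n$. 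This is strictly weaker than the theorem's conclusion that \emph{some fixed species} $i$ satisfies $\lim_{t\to\infty}x_i(t)=0$ (recall that extinction, and hence failure of the $\limsup$-persistence defined just before the theorem, requires a single coordinate to tend to zero). In general a product $\prod x_i^{c_i}\to 0$ with bounded $x_i$ does not force any single factor to vanish: the role of minimizer can rotate among species, which is exactly what happens near a heteroclinic cycle on the boundary. Your "cascade" remark --- that a starved predator decays exponentially while its prey is small --- is the right intuition but is not an argument: the prey may recover before the predator is driven to zero, and this can repeat indefinitely. Closing this gap requires genuinely more work, e.g.\ showing that the connected invariant $\omega$-limit set must lie in a single coordinate hyperplane (via the chain structure of the boundary flow and an inductive/Butler--McGehee-type analysis), or arguing as in the stochastic Theorem \ref{t:stoc}(ii) where only the weaker time-average extinction is claimed. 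As it stands, the extinction half of the proposal proves time-average extinction of $x_n$ and boundary-accumulation of the trajectory, but not the stated pointwise extinction.
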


In nature, the dynamics of species is inherently stochastic due to the random fluctuations of the environmental factors.
The combined effects of biotic interactions and environmental fluctuations are key when trying to determine species richness. Sometimes biotic effects can result in species going extinct. However, if one adds the effects of a random environment extinction might be reversed into coexistence. In other instances deterministic systems that coexist become extinct once one takes into account environmental fluctuations. A  successful way of studying this interplay is modelling the populations as discrete or continuous time Markov processes and looking at the long-term behavior of these processes (\cite{C00, ERSS13, EHS15,  LES03, SLS09, SBA11, BEM07, BS09, BHS08, CM10, CCA09}).

In order to take into account environmental fluctuatuons and their effect on the persistence or extinction of species one approach is to study systems that have random environmental perturbations. This can be done by studying stochastic differential equations that arise by adding noise to ordinary differential equations. For \textit{compact} state spaces \cite{SBA11} provide results for persistence. These results have been generalized by \cite{HN16} where the authors show how, under some natural assumptions, one can characterize the coexistence and extinction of species living on \textit{non-compact} state spaces. Some of these results hold not only for stochastic differential equations but also for stochastic difference equations (see \cite{SBA11}, piecewise deterministic Markov processes (see \cite{HN16, BL16}) and for general Markov processes (see \cite{B14}).

A natural stochastic analogue of \eqref{e:det} is the system

\begin{equation}\label{e:stoc}
\begin{split}
dX_1(t) &= X_1(t)(a_{10} -a_{11}X_1(t)- a_{12}X_2(t))\,dt + X_1(t)\,dE_1(t)\\
dX_2(t) &= X_2(t)(-a_{20} + a_{21}X_1(t) - a_{23}X_3(t))\,dt+X_2(t)\,dE_2(t)\\
&\mathrel{\makebox[\widthof{=}]{\vdots}} \\
dX_{n-1}(t) &= X_{n-1}(t)(-a_{n-1,0}+a_{n-1,n-2}X_{n-2}(t) - a_{n-1,n}X_n)\,dt+X_{n-1}(t)\,dE_{n-1}(t)\\
dX_n(t) &= X_n(t)(-a_{n0} + a_{n,n-1}X_{n-1}(t)+ X_n(t)\,dE_n(t)
\end{split}
\end{equation}
where $\BE(t)=(E_1(t),\dots, E_n(t))^T=\Gamma^\top\BB(t)$ for an $n\times n$ matrix
$\Gamma$ such that
$\Gamma^\top\Gamma=\Sigma=(\sigma_{ij})_{n\times n}$
and $\BB(t)=(B_1(t),\dots, B_n(t))$ is a vector of independent standard Brownian motions. We denote by $\bdelta^*$ the probability measure putting all of its mass at the origin $(0,\dots,0)$.
\begin{rmk}
There are a few different ways to add stochastic noise to deterministic population dynamics. We assume that the environment mainly affects the growth/death rates of the populations. This way, the growth/death rates in an ODE (ordinary differential equation) model are replaced by their average values to which one adds a random noise fluctuation term. See \cite{T77, B02, G88, HNY16, EHS15, ERSS13, SBA11, HN16, G84} for more details.
\end{rmk}
 Define the stochastic growth rate $\tilde a_{10} := a_{10}-\frac{\sigma_{11}}{2}$ and the stochastic death rates $\tilde a_{j0} := a_{j0} + \frac{\sigma_{jj}}{2}, j\geq 2$. For fixed $j\in \{1,\dots,n\}$ we will see that the system
\begin{equation}\label{e:system}
\begin{split}
-a_{11}x_1 - a_{12}x_2 &= -\tilde a_{10}\\
a_{21}x_1  - a_{23}x_3 &=\tilde a_{20}\\
&\mathrel{\makebox[\widthof{=}]{\vdots}} \\
a_{j-1,j-2}x_{j-2}  -a_{j-1,j}x_{j} &= \tilde a_{j-1,0}\\
a_{j,j-1}x_{j-1} &= \tilde a_{j0}
\end{split}
\end{equation}
is intricately related to the stationary distributions of \eqref{e:stoc}.
It is easy to show that \eqref{e:system} has a unique solution, say $\left(x^{(j)}_1,\dots,x^{(j)}_j\right)$.
Define
\begin{equation}\label{e:inv_j+1}
\I_{j+1} = -\tilde a_{j+1,0} + a_{j+1,j} x^{(j)}_j.
\end{equation}
We will show that when \eqref{e:system} has a strictly positive solution the invasion rate of predator $X_{j+1}$ in the habitat of $(X_1,\dots,X_j)$ is exactly $\I_{j+1}$.
The invasion rate of of predator $X_{j+1}$ is the asymptotic logarithmic growth $\lim_{t\to\infty}\frac{\log X_{j+1}(t)}{t}$ when $X_{j+1}$ is introduced at a low density in $(X_1,\dots,X_j)$.
We also set $\I_1:=\tilde a_{10}$ to be the stochastic growth rate of the prey - this can be seen as the invasion rate of the prey into the habitat, when it is introduced at low densities.

Throughout the paper we define for $j=1,\dots,n$ $$\R_+^{(j)}:=\{\bx=(x_1,\dots,x_n)\in\R^n_+: x_{k}=0\,\text{ for } j<k\leq n\},$$
and
$$\R_+^{(j),\circ}:=\{\bx=(x_1,\dots,x_n)\in\R^n_+: x_k>0\,\text{ for } k\leq j; x_{k}=0\,\text{ for } j<k\leq n\}.$$
\begin{rmk}
We will show in Section \ref{s:proof} that \eqref{e:stoc} has unique strong positive solutions.
Furthermore, we show that if  $\BX:=(X_1,\dots,X_n)$ has an invariant probability measure $\pi$ on $\R_+^{(j),\circ}$ then
\[
\E_{\pi}X_i=\int_{\R^n_+} x_i\pi(d\bx)=x_i^{(j)} \,\text{ for } i\leq j.
\]
That is, the solution of \eqref{e:system} is the vector $(\E_{\pi} X_1,\dots, \E_{\pi} X_j)$ of the expected values of $(X_1,\dots,X_j)$ at stationarity.
\end{rmk}

We define the stochastic analogue $\tilde \kappa $ of $\kappa$ via
\begin{equation}\label{e:tkappa}
\begin{split}
\tilde \kappa(n)=\tilde \kappa &= \tilde a_{10} - \frac{a_{11}}{a_{21}}\left[\tilde a_{20}+\sum_{j=2}^k\left(\prod_{i=2}^j\frac{a_{2i-2,2i-1}}{a_{2i,2i-1}}\right)\tilde a_{2j,0}\right] - \sum_{j=1}^l\left(\prod_{i=1}^j \frac{a_{2i-1,2i}}{a_{2i+1,2i}}\right)\tilde a_{2j+1,0}, k\geq 2.
\end{split}
\end{equation}
where
\[
k =
\left\{
	\begin{array}{ll}
		n/2, & \mbox{if } n ~\text{even} \\
		(n-1)/2, & \mbox{if } n ~\text{odd}
	\end{array}
\right.
\]
and
\[
l =
\left\{
	\begin{array}{ll}
		n/2-1, & \mbox{if } n ~\text{even} \\
		(n-1)/2, & \mbox{if } n ~\text{odd}.
	\end{array}
\right.
\]
For notational simplicity we also define $\tilde\kappa(1):=\tilde a_{10}$.

\begin{rmk}\label{r:kap}
By comparing equations \eqref{e:kappa} and \eqref{e:tkappa} one notes that $\tilde\kappa(n)$ is what one gets if one does the substitutions $a_{i0}\mapsto \tilde a_{i0}, i=1,\dots,n$ in the expression for $\kappa(n)$.
Note that
\[
\begin{split}
\tilde \kappa(n) &= \kappa(n) - \frac{1}{2}\sigma_{11} - \frac{a_{11}}{a_{21}}\left[ \frac{1}{2}\sigma_{22} +\sum_{j=2}^k\left(\prod_{i=2}^j\frac{a_{2i-2,2i-1}}{a_{2i,2i-1}}\right) \frac{\sigma_{2j,2j}}{2}\right] - \sum_{j=1}^l\left(\prod_{i=1}^j \frac{a_{2i-1,2i}}{a_{2i+1,2i}}\right)\frac{\sigma_{2j+1,2j+1}}{2}\\
&<\kappa(n).
\end{split}
\]
Furthermore, if we add one extra predator $X_{n+1}$ to the food chain then one has
\[
\tilde \kappa (n+1)=
\left\{
	\begin{array}{ll}
		\tilde \kappa (n) -\left(\prod_{i=1}^{n/2} \frac{a_{2i-1,2i}}{a_{2i+1,2i}}\right)\tilde a_{n+1,0}, & \mbox{if } n ~\text{even} \\
		 \tilde \kappa (n) -\frac{a_{11}}{a_{21}}\left(\prod_{i=1}^{(n+1)/2} \frac{a_{2i-2,2i-1}}{a_{2i,2i-1}}\right)\tilde a_{n+1,0}, & \mbox{if } n ~\text{odd}
	\end{array}
\right.
\]
and in particular $\tilde \kappa(n+1)<\tilde \kappa (n).$
\end{rmk}
There are different concepts regarding the persistence and extinction of species. We review some of these below.
\begin{deff}
The food chain $\BX$ is \textbf{strongly stochastically persistent} if it has a unique invariant probability measure $\pi^*$ on $\R^{n,\circ}_+$ and
\begin{equation}
\lim\limits_{t\to\infty} \|P_\BX(t, \mathbf{x}, \cdot)-\pi^*(\cdot)\|_{\text{TV}}=0, \;\mathbf{x}\in\R^{n,\circ}_+
\end{equation}
where $\|\cdot,\cdot\|_{\text{TV}}$ is the total variation norm and $P_\BX(t,\mathbf{x},\cdot)$ is the transition probability of $(\BX(t))_{t\geq 0}$.
\end{deff}
\begin{deff}
The species $X_k$ goes \textbf{extinct} if for all $\bx\in\R^{n,\circ}_+$
\[
\PP_\bx\left\{\lim_{t\to\infty}X_k(t)=0\right\}=1.
\]
\end{deff}
\begin{deff}
The species $X_k$ goes \textbf{extinct weakly in mean} if
\[
\lim_{t\to\infty}\dfrac1t\int_0^t\E_\bx\left[X_k(s)\right] \,ds= 0.
\]
\end{deff}
\begin{deff}
The species $X_k$ is \textbf{weakly persistent in mean} if
\[
\lim_{t\to\infty}\dfrac1t\int_0^t\E_\bx\left[X_k(s)\right] \,ds> 0.
\]
\end{deff}
\begin{deff}
The species $(X_1,\dots,X_{j^*})$ are \textbf{time-average persistent in probability} if
for any $\eps>0$, there exists a compact set $K_\eps\subset\R^{(j^*),\circ}_+$ such that
\[
\liminf_{t\to\infty}\dfrac1t\int_0^t\PP_\bx\left\{(X_1(s),\dots, X_k(s))\in K_\eps\right\}\,ds \geq 1-\eps
\]
where $\left(x^{(j^*)}_1,\dots,x^{(j^*)}_{j^*}\right)\in\R_+^{(j^*),\circ} $is the unique solution to \eqref{e:system} with $j=j^*$
\end{deff}
We refer the reader to \cite{S12} for a discussion of various forms of persistence.
Having defined all the necessary concepts we can present the main result of this paper.
\begin{thm}\label{t:stoc}
Assume that $a_{11}>0$ and $\BX(0)=\bx\in\R_+^{n,\circ}$.
\begin{itemize}
\item [(i)] If $\tilde \kappa(n)>0$ the food chain $\BX$ modelled by \eqref{e:stoc}
is time-average persistent in probability.
Moreover,
\[
\lim_{t\to\infty}\dfrac1t\int_0^t\E_\bx\left[X_k(s)\right]\,ds = x^{(n)}_k>0, k=1,\dots,n
\]
where $\left(x^{(n)}_1,\dots,x^{(n)}_{n}\right)\in\R_+^{(n),\circ} $is the unique solution of \eqref{e:system} with $j=n$.

Moreover, if $\Sigma$ is positive definite, then the food chain $\BX$
is strongly stochastically persistent and converges to its unique invariant probability measure $\pi^{(n)}$ on $\R_+^{n,\circ}$.
\item [(ii)] If there exists $j^*<n$ such that $\tilde \kappa(j^*)>0$ and $\tilde \kappa(j^*+1)\leq0$ then the predators $(X_{j^*+1},\dots,X_n)$ go weakly extinct in mean, that is
\[
\lim_{t\to\infty}\dfrac1t\int_0^t\E_\bx\left[X_k(s)\right] \,ds= 0, k>j^*.
\]
At the same time, the species $(X_1,\dots,X_{j^*})$ are time-average persistent in probability
and
\[
\lim_{t\to\infty}\dfrac1t\int_0^t\E_\bx\left[X_k(s)\right]\,ds = x^{(j^*)}_k>0, k\leq j^*
\]
where $\left(x^{(j^*)}_1,\dots,x^{(j^*)}_{j^*}\right)\in\R_+^{(j^*),\circ} $is the unique solution to \eqref{e:system} with $j=j^*$.
\item [(iii)] If $n=2$ we can strengthen the extinction results as follows:
\begin{itemize}
\item If $\I_1\leq 0$ then for any $\BX(0)=\bx\in\R_+^{n,\circ}$ we have that $\PP_\bx$-almost surely the randomized occupation measures $\left(\tilde\Pi_t(\cdot)\right)_{t\geq 0}$ converge weakly to $\bdelta^*$ as $t\to\infty$ and
$$\PP_\bx\left\{ \lim_{t\to\infty}\dfrac{\ln X_1(t)}t=\I_1=\tilde a_{10},\lim_{t\to\infty}\dfrac{\ln X_2(t)}t=-\tilde a_{20}\right\}=1.$$
In particular, $X_2$ goes extinct almost surely exponentially fast. If $\I_1<0$ then we also have that $X_1$ goes extinct almost surely exponentially fast .
\item If $\I_1>0$ and $\I_2< 0$  then for any $\BX(0)=\bx\in\R_+^{n,\circ}$ we have that $\PP_\bx$-almost surely the randomized occupation measures $\left(\tilde\Pi_t(\cdot)\right)_{t\geq 0}$ converge weakly, as $t\to\infty$, to the unique invariant probability measure $\pi^{(1)}$ on $\R^{(1),\circ}_+$ and
$$\PP_\bx\left\{\lim_{t\to\infty}\dfrac{\ln X_2(t)}t=\I_2\right\}=1$$
so that $X_2$ goes extinct almost surely exponentialy fast.
\end{itemize}
\end{itemize}
\end{thm}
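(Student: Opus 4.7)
The plan is to reduce Theorem \ref{t:stoc} to the general persistence/extinction machinery for Kolmogorov-type SDEs on non-compact state spaces (as in \cite{HN16, SBA11}), the real work being to identify the invasion rates of each boundary subsystem with the algebraic quantities $\I_j$ and $\tilde\kappa$ defined through \eqref{e:system} and \eqref{e:tkappa}.

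First I would establish that \eqref{e:stoc} has a unique non-explosive positive strong solution and that some moments are uniformly bounded. A natural Lyapunov function is $V(\bx)=\sum_{i=1}^n c_i\bigl(x_i+\ln(1/x_i)\bigr)$ with positive weights $c_i$ chosen recursively so that the cross terms $c_ja_{j,j+1}x_jx_{j+1}-c_{j+1}a_{j+1,j}x_jx_{j+1}$ between consecutive trophic levels cancel; this is the classical trick that makes the deterministic food chain dissipative, and it carries over to the stochastic setting with only the Itô correction $\tfrac12\sum c_i\sigma_{ii}$ modifying the constant part. The $-a_{11}x_1^2$ term then dominates the growth at infinity.

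Second, I would analyse the boundary face $\R_+^{(j)}$ for each $j\in\{1,\dots,n-1\}$. On this face the process is itself a stochastic Lotka-Volterra food chain of length $j$ with apex $X_j$, so the claim is proved inductively in $j$: assuming $\tilde\kappa(j)>0$, the restricted system admits a unique invariant probability measure $\pi^{(j)}$ supported on $\R_+^{(j),\circ}$ with stationary means solving \eqref{e:system}. The derivation of \eqref{e:system} is the key computation: applying Itô's formula to $\ln X_k$ on the face, using the ergodic theorem for $\pi^{(j)}$, and noting that the $\tfrac12\sigma_{kk}$ correction converts $a_{k0}$ into $\tilde a_{k0}$ gives exactly one linear equation per species, and the resulting tridiagonal system has the unique solution $(x_1^{(j)},\dots,x_j^{(j)})$. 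The invasion rate of species $j{+}1$ against $\pi^{(j)}$ is then obtained by the same Itô argument applied to $\ln X_{j+1}$, producing $\I_{j+1}=-\tilde a_{j+1,0}+a_{j+1,j}x_j^{(j)}$.

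Third, and this is the step I expect to be the main obstacle, one must show that the chain of invasion rates is governed globally by $\tilde\kappa$. Concretely, I would prove by induction on $j$ that $\tilde\kappa(j)>0$ if and only if the solution of \eqref{e:system} is strictly positive, and that in this case $\I_{j+1}$ has the same sign as $\tilde\kappa(j+1)$. The proof is an algebraic telescoping: solving \eqref{e:system} from the apex downward expresses each $x_i^{(j)}$ as the $i$-th partial sum appearing in the definition of $\tilde\kappa$, and the recursion displayed in Remark \ref{r:kap} then propagates positivity exactly as needed.

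Finally, with invasion rates identified, parts (i) and (ii) follow from the abstract theorems of \cite{HN16}: positivity of $\I_{j+1}$ at every $j<n$ (equivalently $\tilde\kappa(n)>0$) yields time-average persistence in probability and, under non-degenerate $\Sigma$, strong stochastic persistence with convergence to $\pi^{(n)}$; while $\tilde\kappa(j^*)>0\geq\tilde\kappa(j^*+1)$ gives $\I_{j^*+1}\leq 0$, so the top $n-j^*$ predators go weakly extinct in mean and occupation measures concentrate on $\R_+^{(j^*)}$, where the mean identities were already established in the second step. The mean formulas $\lim_t t^{-1}\!\int_0^t\E_{\bx}[X_k(s)]\,ds=x_k^{(j^*)}$ come from combining tightness with the integrated form of Itô's formula applied to $\ln X_k$. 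Part (iii) is handled directly since for $n=2$ the prey marginal on the boundary is a one-dimensional logistic diffusion whose ergodic behaviour is explicit: a pathwise comparison bounds $\ln X_2(t)/t$ by $\I_2$ using the law of large numbers for $t^{-1}\!\int_0^t X_1(s)\,ds$, and the almost-sure exponential rate for $X_1$ when $\I_1<0$ is read off the logistic SDE driving it on the $X_2=0$ axis.
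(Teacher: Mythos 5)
Your overall skeleton (moment bounds, identifying the boundary ergodic measures, computing invasion rates and relating them to $\tilde\kappa$, then pushing this through a persistence theorem) is the right shape, and your second and third steps essentially match Lemmas~\ref{lm3.2}, \ref{l:inv} and \ref{lm4.1} of the paper. However, the final step as you state it contains a genuine gap: you propose to close the argument by invoking ``the abstract theorems of \cite{HN16}'', but the paper explicitly observes (in the remark at the end of Section~2) that Assumption~1.1 of \cite{HN16} is \emph{violated} for this model, precisely because there is no self-limitation $-a_{jj}x_j^2$ for the predators $j\geq 2$. The logarithmic Lyapunov function $V(\bx)=\sum_i c_i\bigl(x_i+\ln(1/x_i)\bigr)$ you propose is the standard device of that framework, and it fails to produce the required uniform drift inequality here: the only negative quadratic term comes from $-a_{11}x_1^2$, which is not enough to dominate the quantity $\gamma_b\sum_i|f_i(\bx)|$ appearing in the growth hypothesis, since the latter grows linearly in \emph{all} coordinates. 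So one cannot simply cite the general machinery.

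What the paper does instead is tailor the last step to the tridiagonal structure. Moment bounds are obtained from the purely linear Lyapunov function $V(\bx)=\sum_i c_i x_i$ (Lemma~\ref{lm3.1}), which is enough because the telescoping choice of $c_i$ makes all the cross terms cancel and leaves $-a_{11}x_1^2$ to control the whole system. Then one works directly with the empirical measures $\Pi^\bx_t=\tfrac1t\int_0^t\PP_\bx\{\BX(s)\in\cdot\}\,ds$: using the Feller/moment bounds these are tight; any weak limit $\nu$ is decomposed as $\nu=\rho\nu_1+(1-\rho)\nu_2$ with $\nu_1\in\Conv(\M)$ supported on $\partial\R_+^n$ and $\nu_2$ supported on the interior; combining the explicit computation $\lambda_{k_1+1}(\mu_{k_1})=\I_{k_1+1}>0$ on the lowest boundary face present in $\nu_1$ with the elementary fact (from It\^o and \eqref{e1-lm3.2}) that $\limsup_t t^{-1}\E_\bx\ln X_i(t)\leq 0$ forces $\rho=0$. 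This contradiction argument replaces the off-the-shelf persistence theorem. Tightness of $\M_j$ inside $\R_+^{(j),\circ}$ (Lemma~\ref{lm3.4}) is also proved by a bespoke argument rather than taken from a general framework. A second, smaller discrepancy in your write-up: you propose to prove the existence and uniqueness of $\pi^{(j)}$ on each boundary face inductively, but in the degenerate case the paper does not claim (or need) uniqueness of the invariant measure on $\R_+^{(j),\circ}$; it only uses that all measures in $\M_j$ share the same first moments, which follows from $\lambda_i(\mu)=0$ for $i\in I_\mu$ together with the linear algebra of \eqref{e:system}. You should rework the last step along these lines rather than relying on \cite{HN16}.
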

\begin{rmk}\label{r:extra_predator}
We note that by Theorem \ref{t:stoc} the food chain persists when $\I_j>0, j=2,\dots,n$ and goes weakly extinct when $\I_{j^*+1}<1$ for some $j^*\leq n-1$.  It is key to note that $\I_j$ is independent of the coefficients $(a_{lm}), l>j$.

As such, if we add one extra predator at the top of the food chain the quantities  $\I_j>0, j=2,\dots,n$ remain unchanged and we get one extra invasion rate $\I_{n+1}$.
In this setting, when we have $n$ predators, the system persists when $\I_j>0, j=2,\dots,n$ and $\I_{n+1}>0$ and goes extinct when  $I_{j^*+1}<1$ for some $j^*\leq n$. This means that the introduction of an apex predator makes extinction \textbf{more likely}.
\end{rmk}

\begin{rmk}
Having $\Sigma$ positive definite guarantees that the system \eqref{e:stoc} is nondegenerate and that the noise is truly $n$ dimensional. Otherwise the noise is degenerate.
\end{rmk}

Theorem \ref{t:stoc} extends previous results on stochastic Lotka-Volterra systems in two or three dimensions (see \cite{LB16, HN16, R03}) to an $n$ dimensional setting. We also generalize the work by \cite{G84} where the author gives sufficient conditions for stochastic boundedness persistence of stochastic Lotka-Volterra type food web models in bounded regions of state space. We note that the main results by \cite{G84} only say something about persistence until the first exit time of the process from a compact rectangular region $R_\gamma\subset \R_+^{n,\circ}$. Once the process exits the region, one cannot say whether the species persist or not. Partial results for the existence of invariant probability measures for stochastic Lotka-Volterra systems have been found in \cite{P79}. However, these conditions are quite restrictive and impose artificial constraints on the interaction coefficients. In contrast, our results for persistence and extinction are sufficient and  necessary. Moreover, based on which conditions are satisfied, we can say exactly which species persist and which go extinct.

\section{Mathematical framework}
We rewrite \eqref{e:stoc} as

\begin{equation}\label{e:system_2}
dX_i(t)=X_i(t) f_i(\BX(t))dt+X_i(t)dE_i(t), ~i=1,\dots,n
\end{equation}
where $\BX:=(X_1(t),\dots,X_n(t))$. This is a stochastic process that takes values in $\R_+^{n,\circ}:=(0,\infty)^n$.

The random normalized occupation measures are defined as $$\wtd \Pi_t(B):=\dfrac1t\int_0^t\1_{\{\BX(s)\in\cdot\}}ds,\,t>0, B\in\mathcal{B}(\R_+^{n,\circ})$$
where $\mathcal{B}(\R_+^{n,\circ})$ is the set of all Borel measurable subsets of $\R_+^{n,\circ}$. Note that $\wtd \Pi_t(B)$ tells us the fraction of time the process $\BX$ spends in the set $B$ during the interval $[0,t]$.

Let $\M$ be the set of ergodic invariant probability measures of $\BX$ supported on the boundary $\partial\R^n_+:=\R_+^n\setminus \R_+^{n,\circ}$. For a subset $\wtd\M\subset \M$, denote by $\Conv(\wtd\M)$ the convex hull of $\wtd\M$,
that is the set of probability measures $\pi$ of the form
$\pi(\cdot)=\sum_{\mu\in\wtd\M}p_\mu\mu(\cdot)$
with $p_\mu>0,\sum_{\mu\in\wtd\M}p_\mu=1$.

Note that each subspace of $\R^n_+$ of the form
$$\Big\{(x_1,\dots,x_n)\in\R^n_+: x_i>0 \text{ for } i\in\{\tilde n_1,\dots,\tilde n_k\}; \text{ and }x_i=0\text{ if } i\notin\{\tilde n_1,\dots,\tilde n_k\} \Big\}$$
for some $\tilde n_1,\dots,\tilde n_k\in\N$
satisfying $0<\tilde n_1<\dots<\tilde n_k\leq n$
is an invariant set for the process $\BX$.

As a result any ergodic measure $\mu\in\M$
must be supported in a subspace of this form. More specifically,
there exist $0<n_1<\dots< n_k\leq n$
(if $k=0$ there are no $n_1,\dots, n_k$)
such that $\mu(\R^{\mu,\circ}_+)=1$ where
$$\R_+^\mu:=\{(x_1,\dots,x_n)\in\R^n_+: x_i=0\text{ if } i\in I_\mu^c\}$$
for
$I_\mu:=\{n_1,\dots, n_k\}$,
$I_\mu^c:=\{1,\dots,n\}\setminus\{n_1,\dots, n_k\}$,
$$\R_+^{\mu,\circ}:=\{(x_1,\dots,x_n)\in\R^n_+: x_i=0\text{ if } i\in I_\mu^c\text{ and }x_i>0\text{ if  }x_i\in I_\mu\},$$ and $\partial\R_+^{\mu}:=\R_+^\mu\setminus\R_+^{\mu,\circ}$.
For the Dirac-measure $\bdelta^*$ concentrated at the origin $0$, we have $\I_{\bdelta^*}=\emptyset$.

\begin{rmk}
Note that $\Conv(\M)$ is exactly the set of invariant probability measures of the process $\BX$ supported on the boundary $\partial \R_+^{n}$.
\end{rmk}

For a probability measure $\mu$ on $\R^n_+$, we define the $i$th Lyapunov exponent (when it exists) via
\begin{equation}\label{Lya.exp}
\begin{aligned}
\lambda_j(\mu):=&\int_{\R^n_+}\left(f_j(\bx)-\dfrac{\sigma_{jj}}2\right)\mu(d\bx)
=&
\begin{cases}
\int_{\R^n_+}\left(\tilde a_{10}-a_{11}x_1 - a_{12}x_2\right)\mu(d\bx) &\text{ if }\, j=1,\\
\int_{\R^n_+}\left(-\tilde a_{n0}+a_{n,n-1}x_{n-1}\right)\mu(d\bx)& \text{ if } j=n,\\
\int_{\R^n_+}\left(-\tilde a_{j,0}+a_{j,j-1}x_{j-1}  -a_{j,j+1}x_{j+1}\right)\mu(d\bx)& \text{ otherwise}.
\end{cases}
\end{aligned}
\end{equation}
\begin{rmk}
To determine the Lyapunov exponents of an ergodic invariant measure $\mu\in\M$,
one can look at the equation for $\ln X_i(t)$. An application of It\^o's Lemma yields that		
$$
\dfrac{\ln X_i(t)}t=\dfrac{\ln X_i(0)}t+\dfrac1t\int_0^t\left[f_i(\BX(s))-\dfrac{\sigma_{ii}}2\right]ds+\dfrac1t\int_0^t dE_i(s).
$$

If $\BX$ is close to the support of an ergodic invariant measure $\mu$ for a long time $t\gg 1$,
then
$$\dfrac1t\int_0^t\left[f_i(\BX(s))-\dfrac{\sigma_{ii}}2\right]ds$$
can be approximated by the average with respect to $\mu$
$$\lambda_i(\mu)=\int_{\partial \R^n_+}\left(f_i(\bx)-\dfrac{\sigma_{ii}}2\right)\mu(d\bx).$$
On the other hand, the term $$\dfrac{\ln X_i(0)}t+\frac{E_i(t)}{ t}$$ is negligible for large $t$ since
\[
\PP_\bx \left\{\lim_{t\to\infty} \left(\dfrac{\ln X_i(0)}t+\frac{E_i(t)}{ t}\right)=0\right\}=1.
\]
This implies that $\lambda(\mu_i), i=1,\dots, n$ are the Lyapunov exponents of $\mu$.
\end{rmk}



\begin{rmk}
Straightforward computations show that for all $\bc\in \R_+^{n,\circ}$ and $\gamma_b>0$
\[
\limsup\limits_{\|x\|\to\infty}\left[\dfrac{\sum_i c_ix_if_i(\bx)}{1+\sum_i c_ix_i}-\dfrac12\dfrac{\sum_{i,j} \sigma_{ij}c_ic_jx_ix_j}{(1+\sum_i c_ix_i)^2}+\gamma_b\left(1+\sum_{i} (|f_i(\bx)|)\right)\right]>0.
\]
As a result Assumption 1.1 of \cite{HN16} is violated and we have to use different methods in this setting.
\end{rmk}

\section{Proofs}\label{s:proof}

\begin{lm}\label{lm3.0}
Suppose that a sequence $\{\nu_k, k=1,2,\dots\}$ of probability measures on $\R^{n}_+$
converges weakly to $\nu_0$.
Furthermore, assume that
$$\sup_k\int_{\R^n_+}\|\bx\|^m\nu_k(d\bx)\leq H. $$
If $h:\R^n_+\mapsto\R$ is a continuous function
satisfying $$\lim_{\|\bx\|\to\infty}\dfrac{h(x)}{\|\bx\|^m}=0$$
then $$\lim_{k\to\infty}\int_{\R^n_+}h(x)\nu_k(d\bx)=\int_{\R^n_+}h(x)\nu_0(d\bx).$$
\end{lm}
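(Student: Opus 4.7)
\medskip
\noindent\textbf{Proof plan for Lemma \ref{lm3.0}.} The plan is to run the standard truncation argument that upgrades weak convergence from bounded continuous test functions to continuous test functions with sub-polynomial growth, using the uniform $m$th moment bound to control the tails. The assumption $h(\bx)/\|\bx\|^m \to 0$ as $\|\bx\|\to\infty$ is exactly what is needed so that the truncation error can be made uniformly small in $k$.

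First, I would fix $\eps>0$ and, by the growth hypothesis on $h$, pick $R>0$ so large that $|h(\bx)|\le \eps\,\|\bx\|^m$ whenever $\|\bx\|\ge R$. Next I would introduce a continuous cut-off $\phi_R:\R^n_+\to[0,1]$ with $\phi_R(\bx)=1$ for $\|\bx\|\le R$, $\phi_R(\bx)=0$ for $\|\bx\|\ge R+1$, and decompose $h=h\phi_R+h(1-\phi_R)$. The function $h\phi_R$ is bounded and continuous with compact support, so weak convergence $\nu_k\Rightarrow\nu_0$ immediately gives
\[
\lim_{k\to\infty}\int_{\R^n_+} h(\bx)\phi_R(\bx)\,\nu_k(d\bx)=\int_{\R^n_+} h(\bx)\phi_R(\bx)\,\nu_0(d\bx).
\]

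For the tail part, I would use the moment bound to estimate
\[
\left|\int_{\R^n_+} h(\bx)(1-\phi_R(\bx))\,\nu_k(d\bx)\right|\le \int_{\{\|\bx\|\ge R\}}|h(\bx)|\,\nu_k(d\bx)\le \eps\int_{\R^n_+}\|\bx\|^m\,\nu_k(d\bx)\le \eps H
\]
for every $k\ge 1$. To handle the same tail under $\nu_0$, I would invoke the Portmanteau theorem applied to the continuous nonnegative function $\bx\mapsto\|\bx\|^m\wedge N$: letting $N\to\infty$ and using monotone convergence together with the uniform bound $\sup_k\int\|\bx\|^m\,\nu_k(d\bx)\le H$ yields $\int\|\bx\|^m\,\nu_0(d\bx)\le H$, so the same tail estimate $\eps H$ holds for $\nu_0$.

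Combining the three estimates gives
\[
\limsup_{k\to\infty}\left|\int_{\R^n_+} h\,d\nu_k-\int_{\R^n_+} h\,d\nu_0\right|\le 2\eps H,
\]
and letting $\eps\downarrow 0$ finishes the argument. The only mildly nontrivial step is verifying that the limit measure $\nu_0$ inherits the moment bound, which is the reason for passing through the truncated moments; everything else is routine. I do not expect genuine obstacles beyond keeping track of this Fatou-type step.
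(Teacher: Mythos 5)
Your proof is correct and follows essentially the same truncation strategy as the paper: cut off $h$ with a compactly supported continuous bump, use weak convergence on the compactly supported part, and use the growth condition on $h$ together with the uniform $m$th moment bound to control the tails. In fact, you are slightly more careful than the printed proof: the paper bounds the tail $\int (1-\phi_l)|h|\,d\nu_k$ uniformly in $k$ and then, in passing from equations (3.2) and (3.3) to the final $2\eps$ estimate, tacitly uses the same tail bound for the limit $\nu_0$ without explicitly establishing $\int \|\bx\|^m\,d\nu_0 \leq H$. Your Portmanteau/monotone-convergence step with $\|\bx\|^m\wedge N$ is precisely what justifies this; it is a genuine (if minor) gap in the paper's exposition that you correctly identified and filled. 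No issues with the argument.
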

\begin{proof}
Let $\eps>0$. Since
$\lim_{\|\bx\|\to\infty}\dfrac{h(x)}{\|\bx\|^m}=0,$
there is $\ell_\eps>0$
such that $|h(x)|\leq \dfrac{\eps\|\bx\|^m}H$ for all $\bx$ satisfying $\|\bx\|>\ell_\eps$.
This implies that for any $k$
$$\int_{\R^n_+}\1_{\{\|\bx\|>\ell_\eps\}}h(x)\nu_k(d\bx)
\leq \dfrac{\eps}H\int_{\R^n_+}\|\bx\|^m\nu_k(d\bx)\leq \eps.$$
Let $\phi_l(\cdot):\R^n_+\to[0,1]$ be a continuous function with compact support satisfying  $\phi_l(\bx)=1$ if $\|\bx\|\leq l_\eps$. 
One gets that for any $k$ the following sequence of inequalities hold
\begin{equation}\label{e2.10}
\begin{aligned}
\int_{\R^n_+}\left(1-\phi_l(\bx)\right)|h(\bx)|\nu_k(d\bx)\leq \int_{\R^n_+}\1_{\{\|\bx\|>\ell_\eps\}}h(x)\nu_k(d\bx)\leq\eps.
\end{aligned}
\end{equation}
Since $\nu_k$ converges weakly to $\nu_0$ we get
\begin{equation}\label{e2.12}
\lim\limits_{k\to\infty}\int_{\R^n_+}\phi_l(\bx)h(\bx)\nu_k(d\bx)=\int_{\R^n_+}\phi_l(\bx)h(\bx)\pi(d\bx).
\end{equation}
As a consequence of \eqref{e2.10} and \eqref{e2.12}
\begin{equation}
\limsup\limits_{k\to\infty}\left|\int_{\R^n_+}h(\bx)\nu_k(d\bx)-\int_{\R^n_+}h(\bx)\pi(d\bx)\right|\leq2\eps.
\end{equation}
The desired result follows by letting $\eps\to0$.

\end{proof}
\begin{lm}\label{lm3.1}
We have the following claims:
\begin{itemize}
\item For any $\bx=(x_1,\dots,x_n)\in \R_+^{n}$ the system \eqref{e:system_2} has a unique strong solution with initial value $\BX(0)=\bx $. The solution satisfies
\begin{equation}\label{e.pos.sol}
\PP_\bx\{X_i(t)>0, t\geq 0\} = 1 \text{ if } x_i>0;\,\text{ and }\, \PP_\bx\{X_i(t)=0, t\geq 0\} = 1\,\text{ if } x_i=0.
\end{equation}
\item The process $\BX$ is a Markov-Feller process.
\item There exist constants $p>0$ and $M_p>0$ such that
\begin{equation}\label{e1-lm3.2}
\limsup_{t\to\infty} \E_\bx \|\BX(t)\|^{1+p}\leq M_p,\, \bx\in\R^n_+.
\end{equation}
\item For any invariant measure $\mu$ of $\BX$, we have
\begin{equation}\label{e2-lm3.2}
\int_{\R^n_+}\|\bx\|^{1+p}\mu(d\bx)\leq M_p.
\end{equation}
\end{itemize}
\end{lm}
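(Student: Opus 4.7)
The four claims can all be deduced from a single Foster--Lyapunov inequality adapted to the chain structure. I would set $c_1:=1$ and recursively $c_{i+1}:=c_i\, a_{i,i+1}/a_{i+1,i}$ for $i=1,\dots,n-1$, so that the off-diagonal quadratic terms in $\sum_i c_i x_i f_i(\bx)$ cancel pairwise, leaving only
\[
\sum_{i=1}^n c_i x_i f_i(\bx) \;=\; a_{10}x_1 - a_{11} x_1^2 - \sum_{i=2}^n c_i a_{i0}\, x_i.
\]
Completing the square in $a_{10}x_1 - a_{11}x_1^2$ (using $a_{11}>0$) gives $\sum_i c_i x_i f_i(\bx) \le C_1 - \alpha\sum_i c_i x_i$ for constants $C_1,\alpha>0$. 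With $V(\bx):=(1+\sum_i c_i x_i)^{1+p}$ and $\mathcal{L}$ the generator of \eqref{e:system_2}, a direct It\^o computation gives
\[
\mathcal{L}V(\bx) \;\le\; (1+p)\bigl(1+\textstyle\sum_i c_i x_i\bigr)^{p}\bigl[C_1-\alpha\textstyle\sum_i c_i x_i\bigr] + \tfrac{p(1+p)}{2}\,\sigma_{\max}\bigl(1+\textstyle\sum_i c_i x_i\bigr)^{p+1},
\]
where $\sigma_{\max}$ is any constant with $\sum_{i,j}\sigma_{ij}c_i c_j x_ix_j\le\sigma_{\max}(\sum_i c_i x_i)^2$. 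Choosing $p>0$ small enough that $p\sigma_{\max}/2<\alpha/2$ collapses the right-hand side to a clean Foster--Lyapunov bound $\mathcal{L}V \le K - \gamma V$ for some $K,\gamma>0$.

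The first two claims then follow routinely. Local Lipschitz continuity of the drift and diffusion on $\R^{n,\circ}_+$ produces a unique strong local solution; Khasminskii's non-explosion test applied to the proper function $V$ extends the solution globally. Positivity is dictated by the multiplicative noise: if $x_i>0$, then $X_i(t)=x_i\exp\bigl(\int_0^t(f_i(\BX(s))-\sigma_{ii}/2)\,ds + E_i(t)\bigr)>0$, while if $x_i=0$, uniqueness forces $X_i\equiv 0$. The Markov--Feller property is standard from strong uniqueness and continuous dependence of the flow on the initial condition.

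For \eqref{e1-lm3.2}, Dynkin's formula together with $\mathcal{L}V\le K-\gamma V$ and a Gronwall estimate (using a localising sequence $\tau_R=\inf\{t:\|\BX(t)\|>R\}$, then $R\to\infty$) yields $\E_\bx V(\BX(t))\le e^{-\gamma t}V(\bx) + K/\gamma$; since $V(\bx)$ dominates a constant multiple of $\|\bx\|^{1+p}$ for large $\|\bx\|$, \eqref{e1-lm3.2} follows with $M_p$ a fixed multiple of $K/\gamma$. The step requiring most care is \eqref{e2-lm3.2}: one cannot integrate the pointwise Lyapunov inequality against an invariant $\mu$ without first knowing $V\in L^1(\mu)$. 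To get around this I would use the bounded truncation $V_N:=V\wedge N$ and invariance of $\mu$ to write
\[
\int V_N\,d\mu \;=\; \int P_tV_N\,d\mu \;\le\; \int \min\!\bigl(e^{-\gamma t}V(\bx)+K/\gamma,\; N\bigr)\mu(d\bx);
\]
for $N\ge K/\gamma$, dominated convergence (dominated by the constant $N$) as $t\to\infty$ yields $\int V_N\,d\mu\le K/\gamma$, and monotone convergence in $N$ then gives $\int V\,d\mu\le K/\gamma$, which is \eqref{e2-lm3.2}.

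The main conceptual obstacle is identifying the weights $c_i$: without the telescoping cancellation, the cross terms $x_ix_{i\pm1}$ are of the same order as the putative damping and no power-type Lyapunov function can absorb them. The main technical obstacle is \eqref{e2-lm3.2}, handled by the truncation-and-invariance trick above rather than any direct substitution of $V$ into the stationary equation.
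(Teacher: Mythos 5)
Your proposal is correct and follows essentially the same route as the paper: the same telescoping weights $c_i$, a Lyapunov function of the form $(\text{linear combination})^{1+p}$ with $p$ chosen small so the quadratic noise contribution is absorbed by the damping, Dynkin plus Gronwall for \eqref{e1-lm3.2}, and a truncation-and-limit argument for \eqref{e2-lm3.2} (the paper phrases it via Fatou on $H\wedge V^{1+p}$ rather than your $V_N$-then-monotone-convergence, but it is the same idea).
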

\begin{proof}
The existence and uniqueness of strong solutions with initial values $\bx \in \R_+^{n}$
satisfying \eqref{e.pos.sol}
can be shown by standard arguments such as those
from \cite[Theorem 2.1]{LM09} and \cite[Lemma 1]{LB16}. Let $$V(\bx):=\sum_{i=1}^n c_ix_i\,\text{ where }\, c_1=1, c_i:=\prod_{j=2}^i\dfrac{a_{k-1,k}}{a_{k,k-1}}, i\geq 2.$$
We have
\begin{equation}\label{e4-lm3.2}
\begin{aligned}
d V(\BX(t))=&\left[X_1(t)(a_{10}-a_{11}X_1(t))-\sum_{i=2}^{n-1} c_iX_i(t)\left[a_{i0}+a_{i,i+1}X_{i+1}(t)\right]-c_na_{i0}X_n(t)\right]dt\\
&+\sum_{i=1}^n c_iX_i(t)dE_i(t).
\end{aligned}
\end{equation}
If we define $A_1=\min_{i=2}^n\{a_{i0}\}>0$ and $A_0=\sup_{\{x_1>0\}} \left\{x_1(a_{i0}-a_{11}x_1)+A_1x_1\right\}<\infty$ then we can see that for all $\bx\in\R_+^{n,\circ}$
\begin{equation}\label{e4.5-lm3.2}
\left[x_1(a_{10}-a_{11}x_1)-\sum_{i=2}^{n-1} c_ix_i\left[a_{i0}+a_{i,i+1}a_{i+1}x_{i+1}\right]-c_na_{i0}x_n\right] \leq\left[A_0 - A_1 V(\bx)\right].
\end{equation}
Let $p>0$ be sufficiently small such that
\begin{equation}\label{e5-lm3.2}
p\sum_{i,j=1}^n c_ic_jx_ix_j\sigma_{ij}\leq \dfrac{A_1}2V^2(\bx).
\end{equation}
In view of \eqref{e4-lm3.2}, \eqref{e4.5-lm3.2}, \eqref{e5-lm3.2} and It\^o's formula,
we have
\begin{equation}\label{e6-lm3.2}
\begin{split}
\Lom V^{1+p}(\bx)\leq& (1+p)V^p(\bx)[A_0-A_1V(\bx)]+p(1+p)V^{p-1}(\bx) \sum_{i,j=1}^n c_ic_jx_ix_j\sigma_{ij}\\
\leq&(1+p)V^p(\bx)\left[A_0-\dfrac{A_1}2V(\bx)\right] \\
\leq& A_2- A_3 V^{p+1}(\bx) \,\text{ for some } A_2, A_3>0.
\end{split}
\end{equation}
Sine $f_i(\bx)$, $i=1,\dots,n$ are loally Lipschitz funtions,
it follows from \eqref{e6-lm3.2} and \cite[Theorem 5.1]{NYZ17} that
the process $\BX$ is a Feller-Markov process.

Moreover, using \eqref{e6-lm3.2}, Dynkin's formula and a standard argument (see e.g. \cite[Theorem 3.2]{LM09} or \cite[Lemma 2.2]{HN16}),
we can easily obtain that
$$\limsup_{t\to\infty} \E_\bx V^{1+p}(\BX(t))\leq \dfrac{A_2}{A_3}\,\text{ for any }\, \bx\in\R^n_+,$$
which implies \eqref{e1-lm3.2}.
For any invariant probability measure $\mu$ of $\BX$ and $H>0$,
it follows from Fatou's lemma that

$$
\begin{aligned}
\E_\mu \left[H\wedge V^{1+p}(\bx)\right]=&\lim_{t\to\infty}\int_{\R^n_+}\E_\bx\left[H\wedge V^{1+p}(\BX(t)\right]\mu(d\bx)\\
\leq&\int_{\R^n_+}\left(\limsup_{t\to\infty}\E_\bx\left[H\wedge V^{1+p}(\BX(t)\right]\right)\mu(d\bx)\\
\leq & \dfrac{A_2}{A_3}.
\end{aligned}
$$
Then letting $H\to\infty$ we obtain \eqref{e2-lm3.2}.
\end{proof}

\begin{lm}\label{lm3.2}
Suppose $\mu\in\M$ such that $I_\mu= \{n_1,\dots,n_k\}$.
Then
\begin{equation}\label{e:lambda_0}
\lambda_{i}(\mu)=0
\end{equation}
for  any $i\in I_\mu$. As a consequence,
if  $\BX:=(X_1,\dots,X_n)$ has an invariant probability measure $\pi$ on $\R_+^{(j),\circ}$ then
\begin{equation}\label{r:expected_inv}
\E_{\pi}X_i=\int_{\R^n_+} x_i\pi(d\bx)=x_i^{(j)} \,\text{ for } i\leq j.
\end{equation}
That is, the solution of \eqref{e:system} is the vector $(\E_{\pi} X_1,\dots, \E_{\pi} X_j)$ of the expected values of $(X_1,\dots,X_j)$ at stationarity.
\end{lm}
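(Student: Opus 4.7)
The plan is to combine It\^o's formula for $\ln X_i$ with Birkhoff's ergodic theorem and the invariance of $\mu$ to force each Lyapunov exponent to vanish, and then to deduce the moment identity by solving the resulting linear system.

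For the first claim, fix $i \in I_\mu$. Since $\mu$ is supported on $\R_+^{\mu,\circ}$, where $X_i > 0$, It\^o's formula applied to $\ln X_i$ gives
\begin{equation*}
\frac{\ln X_i(t)-\ln X_i(0)}{t} = \frac{1}{t}\int_0^t\left[f_i(\BX(s))-\frac{\sigma_{ii}}{2}\right]ds + \frac{E_i(t)}{t}.
\end{equation*}
The integrand $\bx\mapsto f_i(\bx)-\sigma_{ii}/2$ is affine in the coordinates and hence lies in $L^1(\mu)$ by the moment bound \eqref{e2-lm3.2}. Birkhoff's ergodic theorem then forces the Ces\`aro average to converge $\PP_\mu$-a.s.\ to $\lambda_i(\mu)$, while $E_i(t)/t\to 0$ a.s.\ by the law of large numbers for Brownian motion. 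Therefore $t^{-1}\ln X_i(t)\to\lambda_i(\mu)$ $\PP_\mu$-almost surely.

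Invariance of $\mu$ now rules out a nonzero limit. If $\lambda_i(\mu)>0$, the convergence above forces $X_i(t)\to\infty$ $\PP_\mu$-a.s., contradicting the fact that, for every $t$, the law of $X_i(t)$ under $\PP_\mu$ equals the fixed $i$-th marginal of $\mu$, which is a tight probability measure on $(0,\infty)$. Symmetrically, $\lambda_i(\mu)<0$ would force $X_i(t)\to 0$, contradicting $\mu\{x_i>\eps\}>0$ for some $\eps>0$. Hence $\lambda_i(\mu)=0$ for every $i\in I_\mu$.

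For the consequence, I pass to the ergodic decomposition $\pi = \int \pi_\omega\,d\nu(\omega)$: because $\pi$ is supported on $\R_+^{(j),\circ}$, each ergodic component satisfies $I_{\pi_\omega}=\{1,\dots,j\}$, and applying \eqref{e:lambda_0} component-wise together with linearity of $\lambda_i(\cdot)$ yields $\lambda_i(\pi)=0$ for $i=1,\dots,j$. Writing these identities out via \eqref{Lya.exp}, and noting that $x_k=0$ for $k>j$ on the support of $\pi$ (so the term $-a_{j,j+1}x_{j+1}$ in $\lambda_j(\pi)$ integrates to zero), the $j$ equations reduce to exactly the linear system \eqref{e:system} with unknowns $(\E_\pi X_1,\dots,\E_\pi X_j)$. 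Uniqueness of that system's solution then identifies $(\E_\pi X_1,\dots,\E_\pi X_j)$ with $(x_1^{(j)},\dots,x_j^{(j)})$. The main point to be careful about is the Birkhoff step, which needs $L^1(\mu)$-integrability of $f_i$ (granted by Lemma \ref{lm3.1}) and the mild tautology that a stationary process with a tight fixed marginal on $(0,\infty)$ cannot converge in probability to $0$ or $\infty$.
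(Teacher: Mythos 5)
Your proof is correct and follows essentially the same route as the paper: It\^o's formula for $\ln X_i$, the ergodic theorem applied to the drift, the law of large numbers for $E_i(t)/t$, and then a contradiction with stationarity if $\lambda_i(\mu)\neq 0$; followed by reading off \eqref{e:system} from $\lambda_k(\pi)=0$. One small point where you are actually more careful than the paper: the first claim is proved only for \emph{ergodic} $\mu\in\M$, while the second claim concerns a general invariant probability measure $\pi$ on $\R_+^{(j),\circ}$; the paper silently applies $\lambda_k(\pi)=0$ to $\pi$, whereas you explicitly justify this via the ergodic decomposition and the linearity of $\mu\mapsto\lambda_i(\mu)$, which is the right way to close that gap.
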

\begin{rmk}
The intuition behind equation \eqref{e:lambda_0} is the following: if we are inside the support of an ergodic invariant measure $\mu$ then we are at an `equilibrium' and the process does not tend to grow or decay.
\end{rmk}
\begin{proof}
Let $\BX^\mu(t)=(X_1^\mu(t),\dots,X_n^\mu(t))$
be the stationary solution
whose distribution at any time $t$ is $\mu$.
By It\^o's formula,
we have
$$
\dfrac{\ln X_i^\mu(t)}t=\dfrac{\ln X_i^\mu(0)}t+\dfrac1t\int_0^t\left[f_i(\BX^\mu(s))-\dfrac{\sigma_{ii}}2\right]ds+\dfrac1t\int_0^t dE_i(s), i\in I_\mu.
$$
Since $f_i$ is a linear function,
it follows from  \eqref{e2-lm3.2} that $f_i $ is $\mu$-measurable.
By the ergodicity of $\BX^\mu(t)$,
$$
\lim_{t\to\infty}\dfrac1t\int_0^t\left[f_i(\BX^\mu(s))-\dfrac{\sigma_{ii}}2\right]ds=\lambda_i(\mu)\, \text{ a.s.}\,, i\in I_\mu.
$$
On the other hand,
it is well-known that almost surely
$$
\lim_{t\to\infty}\dfrac1t\int_0^t dE_i(s)=\lim_{t\to\infty}\frac{E_i(t)}{t}=0,\, i\in I_\mu.
$$
Combining these limits, we obtain
$$
\lim_{t\to\infty}\dfrac{\ln X_i^\mu(t)}t=\lambda_i(\mu)\,\,, i\in I_\mu
$$
almost surely. If $\lambda_i(\mu)$ is nonzero,
we have with probability 1 that
$$
\lim_{t\to\infty} X_i^\mu(t)=\begin{cases}
0&\,\text{ if } \lambda_i(\mu)<0\\
\infty&\,\text{ if } \lambda_i(\mu)>0
\end{cases}\,\text{ for } i\in I_\mu.
$$
Both cases contradict the fact that $\mu(\R^{\mu,\circ}_+)=1.$
The first assertion is therefore proved.

To prove the second claim,
suppose that
$\pi$ is an  invariant probability measure
on $\R^{(j),\circ}_+$. Then
$$\lambda_k(\pi)=
\begin{cases}
\tilde a_{10}-a_{11}\int_{\R^n_+}x_1\pi(d\bx)-a_{12}\int_{\R^n_+}x_2\pi(d\bx)&\text{ if } i=1\\
-\tilde a_{k0}+a_{k,k-1}\int_{\R^n_+}x_{k-1}\pi(d\bx)-a_{k,k+1}\int_{\R^n_+}x_{k+1}\pi(d\bx)&\text{ if } k=2,\dots,j-1\\
-\tilde a_{j0}+a_{j,j-1}\int_{\R^n_+}x_{k-1}\pi(d\bx)&\text{ if } k=j
\end{cases}
$$
Solving the system $\lambda_k(\pi)=0, k=1,\dots,j$
we obtain the desired result.
\end{proof}

\begin{lm}\label{l:inv}
Suppose we have $\mu\in\M$ with $I_\mu= \{n_1,\dots,n_k\}$. Then $I_\mu$ must be of the form $\{1,2,\dots,l\}$ for some $1\leq l\leq n$.
In other words,
for $\mu\in\M$, there exists $l\in\{1,\dots,n\}$ suh that
$\mu(\R^{(l),\circ}_+)=1.$
\end{lm}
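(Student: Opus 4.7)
The plan is to argue by contradiction using the Lyapunov exponent vanishing identity of Lemma 3.3 (the previous lemma). If $I_\mu$ is not of the form $\{1,\dots,l\}$, then there must be a ``gap'': some index $j \in I_\mu$ with $j \geq 2$ and $j-1 \notin I_\mu$. The idea is to compute $\lambda_j(\mu)$ directly for such a $j$ and show it is strictly negative, contradicting the fact that $\lambda_j(\mu) = 0$ on the support of any ergodic invariant measure.

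More precisely, I would first assume $I_\mu = \{n_1 < n_2 < \cdots < n_k\}$ is not of the desired form, and let $i$ be the smallest index with $n_i \neq i$; necessarily $n_i \geq i+1$. Setting $j := n_i$, by minimality of $i$ one has either $i = 1$ (so no element of $I_\mu$ is smaller than $j$) or $n_{i-1} = i-1 < j-1$; in both cases $j - 1 \notin I_\mu$. Since $\mu$ is supported on $\R_+^{\mu,\circ}$ and $j-1 \in I_\mu^c$, we have $x_{j-1} = 0$ for $\mu$-a.e.~$\bx$, hence $\int x_{j-1}\,\mu(d\bx) = 0$.

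Now plug this into the formula \eqref{Lya.exp} for $\lambda_j(\mu)$. Since $j \geq 2$, the prey-dependent term $a_{j,j-1}\int x_{j-1}\,\mu(d\bx)$ vanishes, so
\[
\lambda_j(\mu) =
\begin{cases}
-\tilde a_{j,0} - a_{j,j+1}\int_{\R_+^n} x_{j+1}\,\mu(d\bx) & \text{if } 2\le j\le n-1,\\[2pt]
-\tilde a_{n,0} & \text{if } j = n.
\end{cases}
\]
In both cases, since $a_{j,j+1}, \int x_{j+1}\,\mu(d\bx) \geq 0$ and $\tilde a_{j,0} > 0$, we conclude $\lambda_j(\mu) \leq -\tilde a_{j,0} < 0$. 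But $j \in I_\mu$ and Lemma \ref{lm3.2} gives $\lambda_j(\mu) = 0$, a contradiction. Therefore $I_\mu = \{1,2,\dots,l\}$ for some $l \in \{1,\dots,n\}$, equivalently $\mu(\R_+^{(l),\circ}) = 1$.

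I do not expect any real obstacle: the only thing to be careful about is the edge cases $i = 1$ and $j = n$ in the indexing argument, and implicitly the case $\mu = \bdelta^*$ (which has $I_\mu = \emptyset$ and is vacuously of the right form, so it requires no separate treatment). The moment bound from Lemma \ref{lm3.1} is needed only to guarantee that the integrals $\int x_{j-1}\mu(d\bx)$ and $\int x_{j+1}\mu(d\bx)$ are well-defined and finite, which is already established.
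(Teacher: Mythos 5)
Your proof is correct and takes essentially the same approach as the paper: locate a ``gap'' index $j\in I_\mu$ with $j-1\notin I_\mu$, use $\int x_{j-1}\,\mu(d\bx)=0$ to deduce $\lambda_j(\mu)\leq -\tilde a_{j,0}<0$, and contradict Lemma~\ref{lm3.2}. You unify the paper's two cases ($n_1>1$ versus a gap between consecutive elements of $I_\mu$) into a single argument by taking $j$ to be the smallest misplaced index, but the key computation is identical.
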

\begin{proof}
We argue by contradiction. First, suppose that $n_1>1$. By Lemma \ref{lm3.2}
\begin{equation*}
\begin{split}
\lambda_{n_1}(\mu)= 0 &=-\tilde a_{n_1,0} + a_{n_1,n_1-1}\int_{ R_+^\mu} x_{n_1-1}d\mu \\
&= -\tilde a_{n_1,0} \\
&<0
\end{split}
\end{equation*}
which is a contradiction.

Alternatively, suppose that there exists $\mu\in\M$ such that $I_\mu= \{1,\dots, u^*, v^*,\dots, n_k\}$ with $1\leq u^*<v^*-1\leq n_k\leq n$. As a result one can see that $v^*-1\notin I^\mu$. Then Lemma \ref{lm3.2} leads to
\begin{equation*}
\begin{split}
\lambda_{v^*}(\mu)= 0 &=-\tilde a_{n_1,0} + a_{v^*,v^*-1}\int_{ R_+^\mu} x_{v^*-1}d\mu  - a_{v^*,v^*+1}\int_{ R_+^\mu} x_{v^*+1}d\mu\\
&= -\tilde a_{v^*,0} - a_{v^*,v^*+1}\int_{ R_+^\mu} x_{v^*+1}d\mu\ \\
&<0
\end{split}
\end{equation*}
which is a contradiction.
\end{proof}

\begin{lm}\label{lm3.3}
We have the following assertions.
\begin{enumerate}
\item If $\tilde\kappa(n)>0$ then for any $\pi\in\Conv(\M)$ one has $\max_{i=1}^n\lambda_i(\pi)>0$.
\item If $\tilde\kappa(n)\leq 0$ then for any $\mu\in\M$,
we have $\lambda_n(\mu)\leq0$.
Moreover, there is no invariant probability measure in $\R^{n,\circ}_+$.
\end{enumerate}
\end{lm}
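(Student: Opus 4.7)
The plan is to construct a strictly positive weight vector $\bp=(p_1,\dots,p_n)$ so that the weighted Lyapunov sum $\sum_{i=1}^n p_i\lambda_i(\pi)$ collapses into a single-term expression depending only on $\tilde\kappa(n)$ and $\E_\pi X_n$. Motivated by the explicit formula \eqref{e:tkappa}, I would define $p_1:=1$, $p_2:=a_{11}/a_{21}$, and recursively
\[
p_{i+1}:=\frac{a_{i-1,i}}{a_{i+1,i}}\,p_{i-1},\qquad 2\leq i\leq n-1.
\]
Unrolling this recursion according to the parity of the index and matching with \eqref{e:tkappa} one verifies
\[
p_1\tilde a_{10}-\sum_{i=2}^{n}p_i\tilde a_{i0} \;=\; \tilde\kappa(n).
\]
The key algebraic identity is then
\[
\sum_{i=1}^n p_i\lambda_i(\pi)\;=\;\tilde\kappa(n)\;-\;p_{n-1}\,a_{n-1,n}\,\E_\pi X_n
\]
(with the convention that the second term reads $a_{11}\E_\pi X_1$ in the edge case $n=1$), valid for every probability measure $\pi$ on $\R_+^n$ with finite first moment. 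Indeed, plugging \eqref{Lya.exp} into the left-hand side, the coefficient of $\E_\pi X_j$ equals $-p_{j-1}a_{j-1,j}+p_{j+1}a_{j+1,j}$ for $2\leq j\leq n-1$ and $-p_1a_{11}+p_2a_{21}$ for $j=1$; both vanish by construction of the $p_i$'s. The only surviving variable term is that of $\E_\pi X_n$, namely $-p_{n-1}a_{n-1,n}$, because there is no $\lambda_{n+1}$ to cancel it. Lemma \ref{lm3.1} ensures all integrals are finite.

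With the identity in hand, both assertions follow cleanly. For (1), any $\pi\in\Conv(\M)$ is a convex combination of ergodic measures on $\partial\R_+^n$, each of which, by Lemma \ref{l:inv}, has $I_\mu\subseteq\{1,\dots,n-1\}$; hence $X_n\equiv 0$ $\pi$-a.s.\ and the identity reduces to $\sum_i p_i\lambda_i(\pi)=\tilde\kappa(n)>0$, forcing $\max_i\lambda_i(\pi)>0$ since all $p_i>0$. For (2), take $\mu\in\M$ with $I_\mu=\{1,\dots,l\}$, $l\leq n-1$: if $l\leq n-2$ then $X_{n-1}=0$ $\mu$-a.s.\ and \eqref{Lya.exp} directly gives $\lambda_n(\mu)=-\tilde a_{n0}<0$, while if $l=n-1$ then Lemma \ref{lm3.2} gives $\lambda_i(\mu)=0$ for $i\leq n-1$ and $\E_\mu X_n=0$, so the identity collapses to $p_n\lambda_n(\mu)=\tilde\kappa(n)\leq 0$, yielding $\lambda_n(\mu)\leq 0$. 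Finally, if an invariant probability measure $\pi$ on $\R_+^{n,\circ}$ existed, an ergodic decomposition (using invariance of $\R_+^{n,\circ}$) would produce an ergodic such measure, to which Lemma \ref{lm3.2} applies to give $\lambda_i(\pi)=0$ for all $i$; the identity would then force $\tilde\kappa(n)=p_{n-1}a_{n-1,n}\E_\pi X_n>0$ (as $\E_\pi X_n>0$), contradicting $\tilde\kappa(n)\leq 0$.

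The main technical obstacle is the combinatorial bookkeeping required to verify that the recursively defined $p_i$ reproduce the closed form \eqref{e:tkappa}, whose definition separates odd from even indices into two different products with empty-sum conventions at small $n$. Once this identification is done by a short induction on $i$ (together with the two-step recursion decoupling odd- and even-indexed $p_i$'s), the remainder of the argument is a routine linear-algebraic application of Lemmas \ref{l:inv} and \ref{lm3.2} together with the ergodic decomposition.
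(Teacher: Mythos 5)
Your proof is correct, and it takes a genuinely different route from the one in the paper. The paper's argument leans on Lemma~\ref{lm4.1}, which is proved in Section~\ref{s:inv} by explicitly solving the tridiagonal system \eqref{e:system} via the Thomas algorithm, yielding $x_n^{(n)} = d_n'$ proportional to $\tilde\kappa(n)$ with a positive constant; the sign equivalence of $x_n^{(n)}$, $\I_n$, and $\tilde\kappa(n)$ is then fed into a convex-hull decomposition in which one identifies the component $\mu_{i_1}$ with the \emph{smallest} support and checks $\lambda_{i_1+1}(\pi)=\rho_1\I_{i_1+1}>0$. You instead bypass the forward reference to Section~\ref{s:inv} entirely: you build the positive weights $p_i$ so that the internal $\E_\pi X_j$ terms telescope, leaving the single identity $\sum_i p_i\lambda_i(\pi)=\tilde\kappa(n)-p_{n-1}a_{n-1,n}\E_\pi X_n$, which handles every $\pi\in\Conv(\M)$ uniformly with no need to single out a minimal ergodic component. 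This is the classical Lyapunov/``persistence function'' device for Lotka--Volterra chains, and here it gives a more self-contained proof; the tradeoff is that the coefficient-matching between the two-step recursion for $p_i$ and the parity-split closed form \eqref{e:tkappa} has to be verified (you rightly flag this as the bookkeeping burden), whereas the paper's route reuses the tridiagonal-solve computations it needs anyway for the invasion-rate formulas in Section~\ref{s:inv}. One small point you should make explicit in the final paragraph of your argument: the ergodic decomposition of a hypothetical invariant measure on $\R^{n,\circ}_+$ produces ergodic components that a priori could sit on lower strata, so you should note that $\R^{n,\circ}_+$ and $\partial\R^n_+$ are both invariant and hence $\pi(\R^{n,\circ}_+)=1$ forces almost every ergodic component to charge $\R^{n,\circ}_+$ as well; with that remark, Lemma~\ref{lm3.2} applies and the contradiction goes through as you describe.
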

\begin{proof}

If $\tilde\kappa(n)>0$ then $\tilde a_{10}>0$. Thus,
$\lambda_1(\bdelta^*)=\tilde a_{10}>0.$
In view of Lemma \ref{l:inv}, other invariant measures in $\M$ must be supported
on sets of the form $\R^{(i),\circ}_+$ for some $i\in\{1,\dots,n\}$.

Since $\tilde\kappa(i)\geq\tilde\kappa(n)>0$ for $i=1,\dots,n,$
in order to prove claim (1) of the lemma, we need show that if
 $\mu\in\M$ is an invariant measure on $\R^{(i),\circ}_+$
 then $\lambda_{i+1}(\mu)>0.$
By Lemma \ref{lm3.2}
and the definition of $\I_i$,
we have that $\lambda_{i+1}(\mu)=\I_{i+1}$
which, by Lemma \ref{lm4.1}, has the same sign as $\tilde\kappa(i+1)$.
Thus, $\lambda_{i+1}(\mu)>0$.

For $\pi\in\Conv(\M)$,
we can decompose
$\pi=\rho_1\mu_{i_1}+\dots+\rho_k\mu_{i_k}$
where
$0\leq i_1<\dots<i_k<n$, $\rho_j>0$ for $j=1,\dots,k$
and $\mu_{i_j}\in\M_{i_j}$.
Since $i_j\geq i_1+1$ for $j=2,\dots,k$
we have from Lemma \ref{lm3.2} that $\lambda_{i_1+1}(\mu_{i_k})=0$.
Thus,
$\lambda_{i_1+1}(\pi)=\rho_1\lambda_{i_1+1}(\mu_{i_1})=\rho_1\I_{i+1}>0$.
Part (i) is therefore proved.

Now, suppose that $\tilde\kappa(n)\leq 0$.
Clearly, if $\mu\in\M$ and $\mu(\R^{(j),\circ}_+)=1$ for $j<n+1$,
then $\lambda_n(\mu)=-\tilde a_{n0}<0$.
If $\mu(\R^{(n-1),\circ}_+)=1$, it follows from \eqref{e:inv_j+1}, \eqref{Lya.exp} and \eqref{r:expected_inv} that
$\lambda_n(\mu)=\I_n\leq 0$
since $\I_n$ has  the same sign as $\tilde\kappa(i+1)$.

Finally, if there is an invariant  probability measure $\pi$ in $\R^{n,\circ}_+$
then $x^{(n)}_n=\int_{\R^n_+}x_n\pi(d\bx)>0$
is the $n$-th component of the solution to \eqref{e:system} with $j$ replaced by $n$.
By Lemma \ref{lm4.1}, we must have $\tilde\kappa(n)>0$,
which completes the proof.
\end{proof}

\begin{lm}\label{lm3.4}
For $i=1,\dots,n$, let $\M_i$ be the set of invariant probability measures $\mu$
of $\BX$ satisfying $\mu\left(\R^{(i),\circ}_+\right)=1$.
In particular, let $\M_0=\{\bdelta^*\}$.
Suppose that $\tilde\kappa_j>0$ for some $j\in\{1,\dots,n\}$.
Then, if $\M_j\neq \emptyset$, the family $\M_j$
is tight in $\R^{(j),\circ}_+$.
\end{lm}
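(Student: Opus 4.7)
The plan is to combine the uniform $(1+p)$-moment bound from Lemma \ref{lm3.1} with the pinned first-moment identities from Lemma \ref{lm3.2}, via a weak-limit plus ergodic decomposition argument. The starting point is that Lemma \ref{lm3.1} gives $\int\|\bx\|^{1+p}\,d\mu\leq M_p$ uniformly over $\mu\in\M_j$, which by Markov's inequality immediately yields tightness of $\M_j$ as a family of probability measures on the ambient space $\R^n_+$. This handles the escape-to-infinity component; what remains is to rule out mass accumulating on the boundary $\partial\R^{(j)}_+=\R^{(j)}_+\setminus\R^{(j),\circ}_+$.

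For this boundary step I would argue via weak limits. Fix an arbitrary sequence $\{\mu_n\}\subset\M_j$ and, using the tightness just obtained, extract a weakly convergent subsequence with limit $\mu^*$, necessarily supported on the closed set $\R^{(j)}_+$. The Feller property established in Lemma \ref{lm3.1} together with the standard identity $\int f\,d\mu_n=\int P_tf\,d\mu_n$ for $f\in C_b(\R^n_+)$ immediately shows that $\mu^*$ is itself an invariant probability measure. The crucial point is that Lemma \ref{lm3.0} applied with $h(\bx)=x_i$, combined with the uniform $(1+p)$-moment bound, lets first moments pass to the weak limit: $\int x_i\,d\mu^*=\lim_n\int x_i\,d\mu_n=x_i^{(j)}$ for $i=1,\dots,j$, where the last equality is Lemma \ref{lm3.2}. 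Note in particular that $x_j^{(j)}>0$, since $\M_j\neq\emptyset$ provides some $\nu$ with $x_j>0$ on its support and $\int x_j\,d\nu=x_j^{(j)}$ by Lemma \ref{lm3.2}.

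Next I would apply the ergodic decomposition to $\mu^*$. By Lemma \ref{l:inv}, every ergodic invariant probability measure of $\BX$ on $\R^n_+$ lies in $\M_l$ for some $l\in\{0,1,\dots,n\}$, so $\mu^*$ can be written as a mixture $\mu^*=\sum_{l=0}^{j}\alpha_l\tilde\mu_l$ with $\alpha_l\geq 0$, $\sum_l\alpha_l=1$, and each $\tilde\mu_l\in\M_l$ a convex combination of ergodic measures in $\M_l$. Since $x_j\equiv 0$ on $\R^{(l),\circ}_+$ for every $l<j$ while $\int x_j\,d\tilde\mu_j=x_j^{(j)}$ by Lemma \ref{lm3.2}, the identity $\int x_j\,d\mu^*=x_j^{(j)}$ collapses to $x_j^{(j)}=\alpha_j x_j^{(j)}$. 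Positivity of $x_j^{(j)}$ then forces $\alpha_j=1$, so $\mu^*\in\M_j$ with $\mu^*(\R^{(j),\circ}_+)=1$: the limit never places mass on $\partial\R^{(j)}_+$.

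From these two ingredients, tightness on $\R^n_+$ and every weak limit being supported in $\R^{(j),\circ}_+$, tightness in $\R^{(j),\circ}_+$ follows by a routine exhaustion argument. Negating tightness produces a sequence of compacts $K_m\nearrow\R^{(j),\circ}_+$ and measures $\mu_m\in\M_j$ with $\mu_m(K_m)<1-\varepsilon$; extracting a weakly convergent subsequence with limit $\mu^*$ supported on $\R^{(j),\circ}_+$ as above and then choosing an open precompact $U\subset\R^{(j),\circ}_+$ with $\mu^*(U)>1-\varepsilon/2$ would force $\overline{U}\subseteq K_{m_k}$ for large $k$, contradicting $\liminf_k\mu_{m_k}(U)\geq\mu^*(U)$. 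The main obstacle I expect is the interchange of limit and integration in passing from $\int x_i\,d\mu_n$ to $\int x_i\,d\mu^*$, which requires uniform integrability of each coordinate $x_i$ along the sequence; this is precisely what Lemma \ref{lm3.0} delivers given the $(1+p)$-moment bound, so the rest of the argument is structural (Lemma \ref{l:inv} plus ergodic decomposition) and standard.
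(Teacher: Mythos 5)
Your proof is correct, but it takes a genuinely different route from the paper's. The paper also begins with the moment bound from Lemma~\ref{lm3.1} to get tightness in $\R^{(j)}_+$, then argues by contradiction: it extracts $\mu_k\to\tilde\mu$ with $\tilde\mu(\partial\R^{(j)}_+)>0$, decomposes $\tilde\mu=\tilde p\tilde\mu_1+(1-\tilde p)\tilde\mu_2$ with $\tilde\mu_1$ on the boundary and $\tilde\mu_2\in\M_j$, and then works with the full vector of Lyapunov exponents $\lambda_1,\dots,\lambda_j$: Lemma~\ref{lm3.3}(1) (which is where the hypothesis $\tilde\kappa(j)>0$ enters) gives $\max_{i\leq j}\lambda_i(\tilde\mu_1)>0$, while Lemma~\ref{lm3.2} gives $\lambda_i=0$ on $\M_j$, and passing $\lambda_i(\mu_k)\to\lambda_i(\tilde\mu)$ via Lemma~\ref{lm3.0} yields the contradiction. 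You replace the Lyapunov-exponent machinery by the single scalar $\int x_j\,d\mu$, which by Lemma~\ref{lm3.2} is pinned to $x_j^{(j)}$ on $\M_j$ and vanishes identically on any ergodic piece supported in $\R^{(l),\circ}_+$ with $l<j$, so the weak limit is forced to put all its mass on $\R^{(j),\circ}_+$. This is a cleaner test functional, and it makes visible that $\tilde\kappa(j)>0$ is not needed as a separate hypothesis once $\M_j\neq\emptyset$ is assumed (you derive $x_j^{(j)}>0$ directly from non-emptiness; in the paper that positivity is routed through $\tilde\kappa(j)>0$ and Lemma~\ref{lm4.1}). The trade-off is that the paper's version, by phrasing everything in terms of $\lambda_i$ and Lemma~\ref{lm3.3}(1), is structured to be reused nearly verbatim in the persistence argument of Theorem~\ref{t:stoc}(i)--(ii), whereas your moment-based argument is self-contained but more specific to the tightness claim. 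One small point worth stating explicitly in your write-up: the case $\alpha_j=0$ in the ergodic decomposition should be handled by noting it would give $\int x_j\,d\mu^*=0$, contradicting $\int x_j\,d\mu^*=x_j^{(j)}>0$; as written you include a $\tilde\mu_j$ term in the decomposition before knowing $\alpha_j>0$.
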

\begin{proof}
Suppose that $\M_j\neq \emptyset$.
Then it is tight in $\R^{(j)}_+$
due to \eqref{e2-lm3.2}.
Suppose that $\M_j$ is not tight in $\R^{(j),\circ}_+$. Then we can extract a sequence of probability measure $\{\mu_k,k\in\N\}\subset\M_j$,
such that
$\mu_k$ converges weakly to
a probability measure $\tilde\mu$
satisfying
$\mu\left(\partial \R^{(j)}_+\right)>0$.
Decompose $\tilde\mu=\tilde p\tilde\mu_1+(1-\tilde p)\tilde\mu_2$
where $\tilde p\in(0,1]$
and $\tilde\mu_1\left(\partial \R^{(j)}_+\right)=1, \tilde\mu_2\left(\R^{(j),\circ}_+\right)=1$.
Since the $\mu_k$'s are invariant probability measures of $\BX$
and $\partial \R^{(j)}_+$ and $\R^{(j),\circ}_+$ are invariant sets,
we deduce that
$\tilde\mu_1$ and $\tilde\mu_2$
are also invariant probability measures of $\BX$.
Using part (1) of Lemma \ref{lm3.3} with $n$ replaced by $j$,
we have
$$\max_{i=1,\dots,j}\lambda_i(\tilde\mu_1)>0.$$
In view of Lemma \ref{lm3.2},
\begin{equation}\label{e1-lm3.4}
\lambda_i(\mu)=0\,\text{ for }\,i=1,\dots,j\, \text{ if }\mu\in\M_j.
\end{equation}
In particular $\lambda_i(\tilde\mu_2)=0$ for $i=1,\dots,j$.
Thus,
\begin{equation}\label{e2-lm3.4}
\max_{i=1,\dots,j}\lambda_i(\tilde\mu)>0.
\end{equation}
On the other hand,
it follows from \eqref{e1-lm3.4}, \eqref{e2-lm3.2} and Lemma \ref{lm3.0} that
$$
\max_{i=1,\dots,j}\lambda_i(\tilde\mu)=\lim_{k\to\infty}\max_{i=1,\dots,j}\lambda_i(\mu_k)=0
$$
which contradicts \eqref{e2-lm3.4}.
As a result, $\M_j$ is tight in $\R^{(j),\circ}_+$.
\end{proof}

\begin{proof} [Proof of Theorem \ref{t:stoc} (i)]
In view of It\^o's formula,
\begin{equation}\label{e1-thm1.3}
\begin{split}
\ln X_1(t) &=\ln X_1(0)+\int_0^t(\tilde a_{10} - a_{11}X_1(s) - a_{12}X_2(s))\,ds + E_1(t)\\
\ln X_2(t) &=\ln X_2(0)+\int_0^t(-\tilde a_{20} + a_{21}X_1(s) - a_{23}X_3(s))\,ds\,+E_2(t)\\
&\mathrel{\makebox[\widthof{=}]{\vdots}} \\
\ln X_{n-1}(t) &=\ln X_{n-1}(0)+\int_0^t(-\tilde a_{n-1,0}+a_{n-1,n-2}X_{n-2}(s) - a_{n-1,n}X_n)\,ds+E_{n-1}(t)\\
\ln X_n(t) &=\ln X_n(0)+\int_0^t(-\tilde a_{n0} + a_{n,n-1}X_{n-1}(s)\,dt + E_n(t).
\end{split}
\end{equation}
Since the expectations with respect to $\E_\bx$ of the right hand side terms exist,
the expectations $\E_\bx\ln X_i(t)$, $i=1,\dots,n$ also exist.
As a result of Lemma \ref{lm3.1} for all $\bx\in\R^n_+$ we have
\begin{equation}\label{e2-thm1.3}
\limsup_{t\to\infty} \dfrac{\E_\bx\ln X_i(t)}{t}\leq 0, \,i=1,\dots,n.
\end{equation}
Consider the empirical measure
$$\Pi^\bx_t=\frac{1}{t}\int_0^t\PP_\bx\{\BX(s)\in\cdot\}ds.$$
In view of Lemma \ref{lm3.2}, for each $\bx\in\R^n_+$, the family $\{\Pi^\bx_t,t\geq0\}$ is tight. The weak limit points of $\{\Pi^\bx_t,t\geq0\}$ are invariant probability measures of $\BX$ (see e.g. \cite[Proposition 6.4]{EHS15}).
Let $\nu$ be any  weak limit point of $\{\Pi^\bx_t,t\geq0\}$.
We decompose
$\nu$ as
$\nu=\rho\nu_1+(1-\rho)\nu_2$
where
$\rho\in(0,1)$, $\nu_1\in\Conv(\M)$
and $\nu_2\in\M_n$.
At this moment, we have not proved that
$\M_n$ is nonempty,
but the decomposition is well-defined
because we can let $\rho=1$ if $\M_n$ is empty.
Suppose that we have a sequence $(t_\ell,\ell\in\N)$ with $t_\ell\to\infty$ and that $\left(\Pi^\bx_{t_\ell}\right)_\ell$  converges weakly to the probability measure $\nu$.
Moreover, using \eqref{e1-lm3.2} and Lemma \ref{lm3.0}, Lemma \ref{lm3.1},
we can show that
$$\lim_{\ell\to\infty}\int_{\R^n_+}x_i\Pi^\bx_{t_\ell}(d\bx)=\int_{\R^n_+}x_i\nu(d\bx).$$
In light of Lemma \ref{l:inv}, we can write $\nu_1$ in the form
$\nu_1=q_1\mu_{k_1}+\dots+q_m\mu_{k_1}$
where $\mu_{k_i}\in\M_i, q_i>0$ for $i=1,\dots,m$,
and
$0\leq k_1<\dots<k_m<n$.

We have proved that $\lambda_{k_1+1}(\mu_{k_1})>0$.
If $m\geq 2$ then
we have
from Lemma \ref{lm3.2} that $\lambda_{j}(\mu_{k_i})=0$ for $i=2,\dots, m$ and $j=1,\dots,k_i$.
In particular, $\lambda_{k_1+1}(\mu_{k_i})=0$ for $i=2,\dots,m$.
Moreover $\lambda_i(\nu_2)=0$ for $i=1,\dots,n$.
Thus $\lambda_{k_1+1}(\nu)=\rho\lambda_{k_1+1}(\nu_1)=\rho q_1\lambda_{k_1+1}(\mu_{k_1})$.
Therefore
$$
\begin{aligned}
\lim_{\ell\to\infty}&  \dfrac{\E_\bx\ln X_{k_1+1}(t_\ell)}{t}\\
=
&
\lim_{\ell\to\infty}\left[\dfrac{\ln x_{k_1+1}}{t_\ell}+\dfrac1{t_\ell}\E_\bx\int_0^{t_\ell}(-\tilde a_{k_10} + a_{k_1,k_1-1}X_{k_1-1}(s)+a_{k_1,k_1+1}X_{k_1+1}(s)\,dt\right]\\
=&\lim_{\ell\to\infty}\int_{\R^n_+}\left[-\tilde a_{k_10} + a_{k_1,k_1-1}x_{k_1-1}+a_{k_1,k_1+1}x_{k_1+1}\right]\Pi^\bx_{t_\ell}(d\bx)\\
=&\lambda_{k_1+1}(\nu)=\rho q_1\lambda_{k_1+1}(\mu_{k_1})\geq 0.
\end{aligned}
$$
This and \eqref{e2-thm1.3} imply that
$\rho=0$.
Thus, $\M_n$ is nonempty
and all the weak limit points of $\{\Pi^\bx_t,t\geq0\}$
belong to $\M_n$.

Moreover, since $\M_n$ is tight in $\R^{n,\circ}_+$ (see Lemma \ref{lm3.4}),
we can find for any $\eps>0$ a compact set $K_\eps\subset \R^{n,\circ}_+$
such that
$$\liminf_{t\to\infty}\Pi^\bx_t(K_\eps)=\liminf_{t\to\infty}\dfrac1t\int_0^t\PP_\bx\{\BX(s)\in K_\eps\}ds\geq 1-\eps.$$

If $\Gamma$ is positive definite, the diffusion process $\BX$ is nondegenerate in $\R^{n,\circ}_+$.
As a result the invariant probability measure on $\R^{n,\circ}_+$ is unique
and the strong stochastic persistence of $\BX$
follows from \cite[Theorem 20.20]{K02}.
\end{proof}

\begin{proof} [Proof of Theorem \ref{t:stoc} (ii)]

Suppose that $\tilde\kappa(j^*)>0$ and $\tilde\kappa(j^*+1)\leq0$.
By part (2) of Lemma \ref{lm3.3},
if $\mu$ is an ergodic measure of $\BX$
then $\lambda_j(\mu)<0$ for $j>j^*$.
Applying Lemma \ref{lm3.2} and Lemma \ref{lm3.3},
we deduce that
\begin{equation}\label{e.mu.j}
\mu\left(\R^{(j^*)}_+\right)=1
\text{ for any }\mu\in\M.
\end{equation}

Suppose $\bx\in\R^{n,\circ}_+$.
Let $\{t_k\}$ be any subsequence such that $\lim_{k\to\infty} t_k=\infty$
and $\Pi^\bx_{t_k}(d\bx)$ converges weakly to some measure $\nu\in\Conv(\M)$.
We decompose
$\nu$ as
$\nu=\rho\nu_1+(1-\rho)\nu_2$
where
$\rho\in(0,1)$, $\nu_1$ is a invariant probability measure on $\partial\R^{(j^*),\circ}_+$
and $\nu_2\in\M_{j^*}$.
Then, using the arguments from the proof for part (i) of Theorem \ref{t:stoc}
with $n$ replaced by $j^*$,
we can show that
any weak limit points of $\left(\Pi^\bx_{t_k}(d\bx)\right)_{k}$ for $\bx\in\R^{n,\circ}_+$ belong to $\M_{j^*}$.
The time-average persistence in probability of $(X_1,\dots, X_{j^*})$
then follows from Lemma \ref{lm3.4}.
Moreover,
since $$\int_{\R^n_+}x_i'\mu(d\bx')=
\begin{cases}
x^{(j^*)}_i\,&\text{ if } i=1,\dots,j^*,\\
0\,&\text{ if } i=j^*+1,\dots, n.
\end{cases}
\,\text{ for }\, \mu\in\M_{j^*},
$$
for $\bx\in\R^{n,\circ}_+$,
we have from Lemma \ref{lm3.0} and Lemma \ref{lm3.2} that $$
\lim_{t\to\infty}\dfrac1t\int_0^t\E_\bx X_i(s)ds=
\begin{cases}
x^{(j^*)}_i\,&\text{ if } i=1,\dots,j^*,\\
0\,&\text{ if } i=j^*+1,\dots, n.
\end{cases}
$$

\end{proof}

\begin{proof}[Proof of Theorem \ref{t:stoc} (iii)]

Consider the system
\begin{equation}\label{sde:2d}
\begin{split}
dX_1(t) &= X_1(t)(a_{10} - a_{11}X_1(t) - a_{12}X_2(t))\,dt + X_1(t)\,dE_1(t)\\
dX_2(t) &= X_2(t)(-a_{20} + a_{21}X_1(t))\,dt+X_2(t)\,dE_2(t)
\end{split}
\end{equation}
Define the process $\tilde X$ via
\begin{equation}\label{sde:tilde1}
d\tilde X(t) = \tilde X(t)(a_{10} - a_{11}\tilde X(t) )\,dt + \tilde X(t)\,dE_1(t).
\end{equation}

By a comparison argument (see \cite{HNY16,GM94} and \cite{EHS15}[Theorem 6.1]), if $X_1(0)=\tilde X(0)=x\in\R_+^\circ$ then
\begin{equation}\label{e:comp}
\PP_x\left\{X_1(t)\leq \tilde X(t), t\geq0\right\}=1, x\in\R_+^\circ.
\end{equation}
The symptotic behavior of $\tilde X$ is wel-known (see \cite{EHS15}). Namely,
\begin{itemize}
\item If $\I_1=\tilde a_{10}<0$ then $\PP_x\left\{\lim_{t\to\infty}\tilde X(t)=0\right\}=1, x\in\R_+^\circ.$
\item  If $\I_1=\tilde a_{10}=0$ then $\PP_x\left\{\lim_{t\to\infty}\dfrac1t\int_0^t\tilde X(s)ds=0\right\}=1, x\in\R_+^\circ.$
\item If $\I_1=\tilde a_{10}>0$ then $\PP_x\left\{\lim_{t\to\infty}\dfrac1t\int_0^t\tilde X(s)ds=\dfrac{\tilde a_{10}}{a_{11}}\right\}=1.$
\end{itemize}
By It\^o's formula, equation \eqref{e:comp} and the asymptotic behavior of $\dfrac1t\int_0^t\tilde X(s)ds$ we have that if  $(X_1(0),X_2(0))=\bx \in\R_+^\circ$ then
\begin{equation}\label{lnX2}
\begin{split}
\limsup_{t\to\infty}\dfrac{\ln X_2(t)}t=&\limsup_{t\to\infty}\left[-\tilde a_{20}+a_{21}\dfrac1t\int_0^tX_1(s)ds+\dfrac{E_2(t)}t\right]\\
\leq& \limsup_{t\to\infty}\left[-\tilde a_{20}+a_{21}\dfrac1t\int_0^t\tilde X(s)ds\right]\\
\leq& -\tilde a_{20}+a_{21}\left[\dfrac{\tilde a_{10}}{a_{11}}\right]\vee 0\,\,\,\,\PP_\bx-\text{ a.s.}
\end{split}
\end{equation}
Thus, if $\I_1\leq 0$ or if $\I_2=-\tilde a_{20}+a_{21}\left[\dfrac{\tilde a_{10}}{a_{11}}\right]<0$ then $X_2(t)$ goes to 0 as $t\to\infty$ almost surely with respect to $\PP_x, x\in\R_+^\circ$.
This and the fact that $\limsup_{t\to\infty}\frac{1}{t}\int_0^t X_1(s)ds\leq \left[\frac{\tilde a_{10}}{a_{11}}\right]\vee 0$
imply that the family of random nomalized occupation measures
$$\tilde\Pi_t(\cdot):=\dfrac1t\int_0^t\1_{\{\BX(s)\in\cdot\}}\,ds,t\geq0$$
is tight
and the family of its weak$^*$-limits when $t\to\infty$
is a subset of $\Conv(\M)$ (see e.g.\cite{EHS15}).
Using the fact (see  \cite{DNY16} or \cite{HNY16}) that
$$\limsup_{t\to\infty}\dfrac1t\int_0^t X_1^2(s)ds\leq \limsup_{t\to\infty}\dfrac1t\int_0^t\tilde X^2(s)ds<\infty$$
and Lemma \ref{lm3.0},
we deduce that if $(t_k)_{k\in\N}\subset\R_+$ is a random sequence going to $\infty$ as $k\to\infty$ and $\tilde\Pi_{t_k}(\cdot)$ converges weakly to a random measure $\pi$
almost surely, then
\begin{equation}\label{e.ui}
\lim_{k\to\infty}\dfrac1{t_k}\int_0^{t_k}X_1(s)ds\to \int_{\R^n_+}x_1\pi(dx_1,dx_2).
\end{equation}

For $(X_1(0),X_2(0))=\bx \in\R_+^{2,\circ}$, consider two cases.
\begin{itemize}
\item If $\I_1\leq 0$ then
by Theorem \ref{t:stoc}, part (ii), $\bdelta^*$
is the unique invariant probability measure of $(X_1(t),X_2(t))$.
We have from \eqref{e.ui} that
as $t\to\infty$ almost surely with respect to  $\PP_\bx$ the occupation measures $\tilde\Pi_t(\cdot)$ converge weakly to $\bdelta^*$ and
$$\PP_\bx\left\{\lim_{t\to\infty}\dfrac{\ln X_1(t)}t=\I_1=\tilde a_{10}\,\text{ and }\,\lim_{t\to\infty}\dfrac{\ln X_2(t)}t=-\tilde a_{20}\right\}=1.$$
\item If $\I_1>0$ and $\I_2<0$,
using \eqref{lnX2}, (which leads to $\limsup_{t\to\infty}\frac{\ln X_2(t)}t\leq0$ a.s), \eqref{e.ui} and a contradiction argument that is similar to the one from the proof of Theorem \ref{t:stoc}, part (i) and (ii), we get that
as $t\to\infty$ almost surely with respect to  $\PP_\bx$, the occupation measures $\tilde\Pi_t(\cdot)$ converge weakly to the unique invariant probability measure on $\R^{(1),\circ}_+$   and
$$\PP_\bx\left\{\lim_{t\to\infty}\dfrac{\ln X_2(t)}t=\I_2\right\}=1.$$
\end{itemize}
\end{proof}
\section{Invasion rates}\label{s:inv}
We want to analyze the invasion rates $\I_{i}, i=1,\dots,n+1$. For this we note by \eqref{e:inv_j+1} that we have to analyze the system \eqref{e:system}. This can be written in matrix form as
\begin{equation}\label{e:inv_mat}
A \bx^{(n)} = \ba
\end{equation}
where $\bx^{(n)} = \left(x_1^{(n)},\dots,x_n^{(n)}\right)^T$, $\ba = (-\tilde a_{10},\tilde a_{20},\tilde a_{30},\dots, \tilde a_{n0})^T$ and
\[
A= \begin{bmatrix}
    -a_{11} & -a_{12} & 0 &\dots  & 0 &0 \\
    a_{21} & 0 & -a_{23}&\dots  & 0 &0 \\
    0 & a_{32} & 0&\dots  & 0 &0 \\
    \vdots & \vdots & \vdots & \ddots & \vdots & \vdots\\
    0 & 0 & 0 & \dots  &0&-a_{n-1,n}\\
     0 & 0 & 0 & \dots  &a_{n,n-1}&0
\end{bmatrix}
\]
It is well-known that the solution can be obtained by a forward sweep that is a special case of Gaussian elimination (see \cite{M01}). To simplify notation we let
\[
(d_1,\dots,d_n)^T :=(-\tilde a_{10},\tilde a_{20},\tilde a_{30},\dots, \tilde a_{n0})^T,
\]
\[
(c_1,\dots,c_{n-1})^T := (-a_{12},-a_{23},\dots,-a_{n-1,n})^T,
\]

and
\[
(f_2,\dots,f_n)^T := (a_{21}, a_{32},\dots,a_{n,n-1})^T.
\]
Define new coefficients $(c_1',\dots,c_{j-1}'), (d_1',\dots,d_j')$ recursively as
\begin{equation}\label{e:c'_2}
c_i'=\left\{
	\begin{array}{ll}
		\frac{c_1}{-a_{11}}, &  i=1\\
		\frac{c_i}{-f_ic_{i-1}'}, & i=2,3,\dots,n-1
	\end{array}
\right.
\end{equation}
and
\begin{equation}\label{e:d'_2}
d_i'=\left\{
	\begin{array}{ll}
		\frac{d_1}{-a_{11}}, &  i=1\\
		\frac{d_i-f_id_{i-1}'}{-f_ic_{i-1}'}, & i=2,3,\dots,n.
	\end{array}
\right.
\end{equation}
Let $l_i:=\frac{c_i}{-f_i}$. Then \eqref{e:c'_2} can be written as
\[
\frac{c'_{j}}{c'_{j-2}}=\frac{l_j}{l_{j-1}}
\]
which yields when $j$ is even and $4\leq j\leq n$,
\begin{equation}\label{e:c'_even}
\begin{split}
c'_{j}&=c_2' \prod_{i=2}^{j/2}\frac{l_{2i}}{l_{2i-1}}\\
&= -\frac{c_2b_1}{f_2 c_1}\prod_{i=2}^{j/2}\frac{\frac{c_{2i}}{-f_{2i}}}{\frac{c_{2i-1}}{-f_{2i-1}}}\\
&=  \frac{a_{23}a_{11}}{a_{21}a_{12}}\prod_{i=2}^{j/2} \frac{a_{2i-1,2i}}{a_{2i-2,2i-1}}\frac{a_{2i-1,2i-2}}{a_{2i,2i-1}}
\end{split}
\end{equation}
while if $j$ is odd and $3\leq j\leq n$,
\begin{equation}\label{e:c'_odd}
\begin{split}
c'_j&= c'_{1}\prod_{i=1}^{(j-1)/2}\frac{l_{2i+1}}{l_{2i}}\\
&=\frac{c_1}{b_1} \prod_{i=1}^{(j-1)/2}\frac{\frac{c_{2i+1}}{-f_{2i+1}}}{\frac{c_{2i}}{-f_{2i}}}\\
&=\frac{a_{12}}{a_{11}} \prod_{i=1}^{(j-1)/2} \frac{a_{2i,2i+1}}{a_{2i-1,2i}}\frac{a_{2i,2i-1}}{a_{2i+1,2i}}.
\end{split}
\end{equation}
Next, we want to find a formula for $d_n'$. Set $e_i:=\frac{d_{i+1}}{-f_{i+1}c'_{i}}$ and $g_i:=\frac{1}{c'_{i}}$. Then equation \eqref{e:d'_2}can be written as
\[
d_{i+1}' = e_i+g_id_i'.
\]
One can see that the formula for this recursion is given by
\begin{equation}\label{e:d'_form}
\begin{split}
d_n' &= \sum_{j=1}^{n-1}e_j\prod_{i=j+1}^{n-1}g_i + d_1'\prod_{i=1}^{n-1}g_i\\
&= \sum_{j=1}^{n-1}\frac{d_{j+1}}{-f_{j+1}c'_{j}}\prod_{i=j+1}^{n-1}\frac{1}{c'_{i}} + \frac{d_1}{b_1}\prod_{i=1}^{n-1}\frac{1}{c'_{i}}\\
&=-\sum_{j=1}^{n-1} \frac{\tilde a_{j+1,0}}{a_{j+1,j}}  \prod_{i=j}^{n-1}\frac{1}{c'_{i}} + \frac{\tilde a_{10}}{a_{11}}\prod_{i=1}^{n-1}\frac{1}{c'_{i}}\\
&=\prod_{i=1}^{n-1} \frac{1}{c_i}\left(\sum_{j=1}^{n-1}\frac{d_{j+1}}{-f_{j+1}}\prod_{i=1}^{j-1}c_i' + d_1'\right)
\end{split}
\end{equation}
where we make the convention that $\prod_{i=l}^m g_i = 1$ if $l>m$.
For $n\geq 3$ is odd we get
\begin{equation}\label{e:d'_odd}
\begin{split}
d_n' &= \left(\prod_{i=1}^{(n-1)/2}\frac{a_{2i,2i-1}}{a_{2i-1,2i}}\right)\left( \frac{\tilde a_{10}}{a_{11}} -\sum_{j=1}^{(n-1)/2} \frac{\tilde a_{2j+1,0}}{a_{2j+1,2j}} \frac{a_{12}}{a_{11}} \prod_{i=2}^j \frac{a_{2i-2,2i-1}}{a_{2i-1,2i-2}} -\sum_{j=1}^{(n-1)/2} \frac{\tilde a_{2j,0}}{a_{2j,2j-1}} \prod_{i=1}^{j-1}\frac{a_{2i-1,2i}}{a_{2i,2i-1}}\right)
\end{split}
\end{equation}
while for $n\geq 4$ even
\begin{equation}\label{e:d'_even}
\begin{split}
d_n' &= \left(\frac{a_{11}}{a_{12}}\prod_{i=1}^{(n-2)/2}\frac{a_{2i+1,2i}}{a_{2i,2i+1}}\right)\left( \frac{\tilde a_{10}}{a_{11}} -\sum_{j=1}^{(n-2)/2} \frac{\tilde a_{2j+1,0}}{a_{2j+1,2j}} \frac{a_{12}}{a_{11}} \prod_{i=2}^j \frac{a_{2i-2,2i-1}}{a_{2i-1,2i-2}} -\sum_{j=1}^{n/2} \frac{\tilde a_{2j,0}}{a_{2j,2j-1}} \prod_{i=1}^{j-1}\frac{a_{2i-1,2i}}{a_{2i,2i-1}}\right)
\end{split}
\end{equation}
As a result, if $n\geq 3$ is odd we have
\begin{equation}\label{e:inv_j+1_expr_odd}
\begin{split}
\I_{n+1} =& -\tilde a_{n+1,0} + a_{n+1,n} x^{(n)}_n\\
=& -\tilde a_{n+1,0} + a_{n+1,n} d'_n\\
=&  -\tilde a_{n+1,0} + a_{n+1,n}  \left(\prod_{i=1}^{(n-1)/2}\frac{a_{2i,2i-1}}{a_{2i-1,2i}}\right) \Bigg(\frac{\tilde a_{10}}{a_{11}} -\sum_{j=1}^{(n-1)/2} \frac{\tilde a_{2j+1,0}}{a_{2j+1,2j}} \frac{a_{12}}{a_{11}} \prod_{i=2}^j \frac{a_{2i-2,2i-1}}{a_{2i-1,2i-2}}\\
&-\sum_{j=1}^{(n-1)/2} \frac{\tilde a_{2j,0}}{a_{2j,2j-1}} \prod_{i=1}^{j-1}\frac{a_{2i-1,2i}}{a_{2i,2i-1}}\Bigg)
\end{split}
\end{equation}
while for $n\geq 4$ even
\begin{equation}\label{e:inv_j+1_expr_even}
\begin{split}
\I_{n+1} =& -\tilde a_{n+1,0} + a_{n+1,n} x^{(n)}_n\\
=& -\tilde a_{n+1,0} + a_{n+1,n} d'_n\\
=&  -\tilde a_{n+1,0} + a_{n+1,n}  \left(\frac{a_{11}}{a_{12}} \prod_{i=1}^{(n-2)/2}\frac{a_{2i+1,2i}}{a_{2i,2i+1}}\right) \Bigg(\frac{\tilde a_{10}}{a_{11}} -\sum_{j=1}^{(n-2)/2} \frac{\tilde a_{2j+1,0}}{a_{2j+1,2j}} \frac{a_{12}}{a_{11}} \prod_{i=2}^j \frac{a_{2i-2,2i-1}}{a_{2i-1,2i-2}}\\
&-\sum_{j=1}^{n/2} \frac{\tilde a_{2j,0}}{a_{2j,2j-1}} \prod_{i=1}^{j-1}\frac{a_{2i-1,2i}}{a_{2i,2i-1}}\Bigg).
\end{split}
\end{equation}
\begin{lm}\label{lm4.1}
The quantities $x_n^{(n)}$, $\I_n$ and $\tilde\kappa(n)$
have the same sign for any $n\geq 1$.
\end{lm}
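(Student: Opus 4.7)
The approach is to show directly, using the explicit solution of \eqref{e:system} computed in Section \ref{s:inv}, that both $x_n^{(n)}$ and $\I_n$ equal $\tilde\kappa(n)$ multiplied by a strictly positive factor depending only on the interaction coefficients $a_{ij}$; positivity of these prefactors is immediate from $a_{ij}>0$, so this yields the sign-equivalence among all three quantities.

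For $x_n^{(n)}$, the forward sweep of Gaussian elimination in Section \ref{s:inv} produces the closed form $x_n^{(n)}=d_n'$, given by \eqref{e:d'_odd} when $n$ is odd and by \eqref{e:d'_even} when $n$ is even. In each case $d_n'$ is an explicit positive factor multiplied by a bracketed expression; I plan to multiply this bracket by $a_{11}$ and match the coefficient of each $\tilde a_{j0}$ against the corresponding term in \eqref{e:tkappa}, verifying that the bracket is precisely $\tilde\kappa(n)/a_{11}$. For $\I_n$, I use \eqref{e:inv_j+1} with $j=n-1$, namely $\I_n=-\tilde a_{n,0}+a_{n,n-1}\,x_{n-1}^{(n-1)}$, plug in the closed-form expression for $x_{n-1}^{(n-1)}$, and observe that appending the $-\tilde a_{n,0}$ term extends the trailing sum in the bracket by exactly the missing summand (the $j=n/2$ or $j=(n-1)/2$ term) so that, after pulling out a positive prefactor, the bracket becomes $\tilde\kappa(n)$. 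The small cases $n=1$ (with $x_1^{(1)}=\tilde a_{10}/a_{11}$, $\I_1=\tilde a_{10}=\tilde\kappa(1)$) and $n=2$ (with $x_2^{(2)}=\tilde\kappa(2)/a_{12}$ and $\I_2=(a_{21}/a_{11})\tilde\kappa(2)$, where $\tilde\kappa(2):=\tilde a_{10}-(a_{11}/a_{21})\tilde a_{20}$ extends \eqref{e:tkappa} to $k=1$) are handled by direct substitution into \eqref{e:system}.

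The only genuine difficulty is notational bookkeeping: the telescoping products $\prod_{i=1}^j a_{2i-1,2i}/a_{2i+1,2i}$ and $\prod_{i=2}^j a_{2i-2,2i-1}/a_{2i,2i-1}$ appearing in \eqref{e:tkappa} are indexed differently from the products $\prod_{i=1}^{j-1} a_{2i-1,2i}/a_{2i,2i-1}$ and $\prod_{i=2}^j a_{2i-2,2i-1}/a_{2i-1,2i-2}$ appearing in \eqref{e:d'_odd} and \eqref{e:d'_even}, and one must reindex and pull an overall prefactor out in front so that the coefficient of each $\tilde a_{j0}$ becomes visibly equal on both sides. The parity of $n$ forces two nearly identical but separate verifications, but no deeper structural idea is needed beyond the Gaussian-elimination formulas already recorded in Section \ref{s:inv}.
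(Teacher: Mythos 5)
Your proposal is correct and essentially matches the paper's proof: both hinge on the explicit Gaussian-elimination formulas of Section~\ref{s:inv}, and the identification $x_n^{(n)}=d_n'=(\text{positive prefactor})\cdot\tilde\kappa(n)$ by matching coefficients of each $\tilde a_{j0}$ in \eqref{e:d'_odd}--\eqref{e:d'_even} against \eqref{e:tkappa} is exactly what the paper records. The only (inessential) difference is the $\I_n$ step: the paper reads the sign relation off the Thomas recursion $d_n'=(d_n-f_nd_{n-1}')/(-f_nc_{n-1}')$ in one line, whereas you re-derive $\I_n=(\text{positive prefactor})\cdot\tilde\kappa(n)$ by substituting the closed form for $x_{n-1}^{(n-1)}$ into $\I_n=-\tilde a_{n,0}+a_{n,n-1}x_{n-1}^{(n-1)}$ -- slightly more bookkeeping but equivalent, and your route in fact avoids two mutually cancelling sign slips in the paper's printed argument (the paper asserts $c'_{n-1}<0$ and $d_n-f_nd'_{n-1}=\I_n$, whereas \eqref{e:c'_even}--\eqref{e:c'_odd} give $c'_{n-1}>0$ and $d_n-f_nd'_{n-1}=-\I_n$; the errors cancel, so the conclusion stands).
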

\begin{proof}
It follows from \eqref{e:d'_odd}, \eqref{e:d'_even} and \eqref{e:tkappa} that
$$
x_n^{(n)}=d_n'=\begin{cases}
\dfrac1{a_{11}}\left(\prod_{i=1}^{(n-1)/2}\frac{a_{2i,2i-1}}{a_{2i-1,2i}}\right)\tilde\kappa(n)&\,\text{ if } n \text{ is odd}\\
\frac{1}{a_{12}}\left(\prod_{i=1}^{(n-2)/2}\frac{a_{2i+1,2i}}{a_{2i,2i+1}}\right)\tilde\kappa(n)&\,\text{ if } n \text{ is even.}
\end{cases}
$$
Thus, $x_n^{(n)}$ and $\tilde\kappa(n)$ have the same sign.
On the other hand, we deduce from \eqref{e:d'_2} that
$$
x_n^{(n)}=d_n'=\frac{d_n-f_nd_{n-1}'}{-f_nc_{n-1}'}=\dfrac{\I_{n}}{-f_nc_{n-1}'}, n\geq 2.
$$
In view of \eqref{e:c'_2},
we have
$c_{n-1}'<0.$
Thus, $x_n^{(n)}$ and $\I_{n}$ have the same sign for $n\geq 2$.
The case $n=1$ is trivial.
\end{proof}
\begin{rmk}
The invasion rates are functions of the variances $(\sigma_{ii})_{i=1,\dots,n}$. We note that $\I_j(\sigma_1,\dots,\sigma_j)$ is \textbf{strictly decreasing} in each variable $\sigma_i, 1\leq i\leq j$. As a result environmental stochasticity is seen to increase the risk of extinction.

In the limit of no noise (i.e. $\sigma_{ii}\downarrow 0$ for $1\leq 1\leq n$) the invasion rates converge to $\hat \I_i$, that is  $\I_i\uparrow \hat \I_i$ as $\sigma_{ii}\downarrow 0$,  where

\begin{equation}\label{e:inv_j+1_expr_odd_2}
\begin{split}
\hat \I_{n+1} =&  - a_{n+1,0} + a_{n+1,n}  \left(\prod_{i=1}^{(n-1)/2}\frac{a_{2i,2i-1}}{a_{2i-1,2i}}\right) \Bigg(\frac{ a_{10}}{a_{11}} -\sum_{j=1}^{(n-1)/2} \frac{ a_{2j+1,0}}{a_{2j+1,2j}} \frac{a_{12}}{a_{11}} \prod_{i=2}^j \frac{a_{2i-2,2i-1}}{a_{2i-1,2i-2}}\\
&-\sum_{j=1}^{(n-1)/2} \frac{ a_{2j,0}}{a_{2j,2j-1}} \prod_{i=1}^{j-1}\frac{a_{2i-1,2i}}{a_{2i,2i-1}}\Bigg)
\end{split}
\end{equation}
while for $n\geq 4$ even
\begin{equation}\label{e:inv_j+1_expr_even_2}
\begin{split}
\hat \I_{n+1} =&  - a_{n+1,0} + a_{n+1,n}  \left(\frac{a_{11}}{a_{12}} \prod_{i=1}^{(n-2)/2}\frac{a_{2i+1,2i}}{a_{2i,2i+1}}\right) \Bigg(\frac{ a_{10}}{a_{11}} -\sum_{j=1}^{(n-2)/2} \frac{ a_{2j+1,0}}{a_{2j+1,2j}} \frac{a_{12}}{a_{11}} \prod_{i=2}^j \frac{a_{2i-2,2i-1}}{a_{2i-1,2i-2}}\\
&-\sum_{j=1}^{n/2} \frac{ a_{2j,0}}{a_{2j,2j-1}} \prod_{i=1}^{j-1}\frac{a_{2i-1,2i}}{a_{2i,2i-1}}\Bigg).
\end{split}
\end{equation}
Even though our methods do not work in the deterministic setting, the expressions for $\hat \I_1,\dots,\hat \I_n$ give, correctly, the deterministic invasion rates.
\end{rmk}

\section{Discussion}\label{s:disc}
We have analysed the persistence and extinction of species decscribed by a stochastic Lotka-Volterra food chain. Our main result, Theorem \ref{t:stoc}, looks at the setting when there is no intracompetition for the predator species and any species can only interact with the species that are directly above or below it  in the food chain. We show that similarly to the deterministic case (see \cite{GH79}) one single factor, $\tilde\kappa(n)$, determines which species persist and which go extinct in a weak sense. It is interesting to note that one can recover $\tilde\kappa(n)$ from the constant $\kappa(n)$ (which determines the behavior of the deterministic food-chain), by doing the substitutions $a_{10}\mapsto \tilde a_{10} =a_{10} - \frac{\sigma_{11}}{2}$ and $a_{j0}\mapsto \tilde a_{j0}= a_{j0} + \frac{\sigma_{jj}}{2}, j\geq 2$. Therefore, from a persistence/extinction point of view, the effect of the stochastic environment is that it lowers the growth rate of the prey species by one half of the variance of the noise affecting the prey and increases the death rates of all the predators by one half of the variance of the respective noise terms. This shows that in this model environmental noise inhibits the coexistence of species.

For technical reasons we cannot say anything about the speed of convergence to the invariant probability measure. When one or more species go extinct we can only show that they go extinct in a weak sense (other than in dimension $n=2$, where we prove stronger results).

In Section \ref{s:inv} we give explicit expressions for the invasion rates. The invasion rates are closely related to the factors $(\tilde\kappa(i), i=1,\dots,n)$ - something that is shown in Lemma \ref{lm4.1}.
Our results generalize the results from the deterministic setting of \cite{GH79} to their natural stochastic analogues. We are able to find an algebraically tractable criterion (just like in the deterministic setting) for persistence and extinction.

The invasion rates are shown to be closely related to the first moments of the invariant measures living on the boundary $\partial\R_+^n$ of the system. This is the analogue of looking for the different equilibrium points of the deterministic system \eqref{e:det} and then studying the stability of these points.

The main simplification of our model is the fact that the dynamics of each trophic level is governed by the adjoining trophic levels which immediately precede or succeed it. This fact makes it possible to explicitly describe the structure of the ergodic invariant probability measures of the system living on the boundary $\partial\R_+^n$ (Lemma \ref{l:inv}). The key property of an invariant probability measure $\mu$ living on $\partial\R_+^n$ is that if predator $X_j$ is not present then all predators that are above $j$ (that is, $X_i$ with $i>j$) are also not present. This fact is biologically clear because if species $X_j$ does not exist then $X_{j+1}$ must go extinct since it does not have a food source.

We show that the introduction of a new top predator into the ecosystem makes extinction more likely. This agrees with the deterministic case studied in \cite{GH79}.

For more complex interactions between predators and their prey (i.e. a food web instead of a food chain), even when $n=3$, the possible outcomes become much more complicated. We refer the reader to \cite{HN16} for a detailed discussion of the case when one has one prey and two predators and the apex predator eats both the intermediate predator and the prey.

Of course, one would usually want to prove some stronger results for when the species go extinct and $n>2$. We conjecture the following result holds.

\begin{conj}
Assume that $a_{11}>0$, $\Sigma$ is positive definite and $\BX(0)=\bx\in\R_+^{n,\circ}$.  If there exists $j^*<n$ such that $\tilde \kappa(j^*)>0$ and $\tilde \kappa(j^*+1)<0$ then the predators $(X_{j^*+1},\dots,X_n)$ go extinct, that is
\[
\PP_x\left\{\lim_{t\to\infty}\frac{\ln X_k(t)}{t}=\tilde a_{k0}\right\}= 1, k>j^*.
\]
At the same time, the normalized occupation measure of $(X_1,\dots,X_{j^*})$ converges weakly to the unique invariant probability measure $\pi^{(j^*)}$ on $\R_+^{(j^*),\circ}$.
\end{conj}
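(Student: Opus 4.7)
The plan is to upgrade the weakly-in-mean extinction of part (ii) of Theorem \ref{t:stoc} to almost-sure exponential extinction by exploiting the non-degeneracy of $\Sigma$, in the spirit of the $n=2$ argument in part (iii). First, since $\Sigma$ is positive definite, so is its top-left $j^* \times j^*$ block. Applying part (i) of Theorem \ref{t:stoc} to the food chain of length $j^*$ that lives on the invariant face $\R_+^{(j^*),\circ}$ (which has $\tilde\kappa(j^*)>0$), we obtain a unique invariant probability measure $\pi^{(j^*)}$ on $\R_+^{(j^*),\circ}$ to which the restricted process converges in total variation. By Lemma \ref{lm4.1}, the assumption $\tilde\kappa(j^*+1)<0$ gives $\I_{j^*+1}=\lambda_{j^*+1}(\pi^{(j^*)})<0$.

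The core step is to show that for $\bx \in \R_+^{n,\circ}$, the random occupation measures $\tilde\Pi_t$ converge weakly $\PP_\bx$-a.s.\ to $\pi^{(j^*)}$. Tightness is supplied by Lemma \ref{lm3.1} and Lemma \ref{lm3.4}. Combining the tightness with the Markov-Feller property and standard arguments (see Proposition 6.4 of \cite{EHS15}), every a.s.\ weak limit point of $\tilde\Pi_t$ is an invariant probability measure, which by Lemma \ref{l:inv} admits a decomposition $\nu=\rho\nu_1+(1-\rho)\nu_2$ with $\nu_1$ supported on $\bigcup_{j<j^*}\R^{(j),\circ}_+$ and $\nu_2 \in \M_{j^*}\cup\M_n$. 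Using part (2) of Lemma \ref{lm3.3} one rules out any contribution from $\M_n$. The contradiction argument from the proof of (iii) then forces $\rho=0$: any positive mass on a lower face $\M_i$ with $i<j^*$ would yield $\lambda_{i+1}(\nu)>0$, incompatible with the bound $\limsup_{t\to\infty}\ln X_{i+1}(t)/t\leq 0$ that follows from $\E_\bx\|\BX(t)\|^{1+p}$ being uniformly bounded. Non-degeneracy of $\Sigma$ guarantees that the only element of $\M_{j^*}$ is $\pi^{(j^*)}$.

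Once the a.s.\ convergence $\tilde\Pi_t \Rightarrow \pi^{(j^*)}$ is in hand, we push it through It\^o's formula. For the first extinct species,
\begin{equation*}
\frac{\ln X_{j^*+1}(t)}{t} = \frac{\ln X_{j^*+1}(0)}{t} + \int_{\R^n_+}\!\!\bigl(-\tilde a_{j^*+1,0}+a_{j^*+1,j^*}x_{j^*}-a_{j^*+1,j^*+2}x_{j^*+2}\bigr)\,\tilde\Pi_t(d\bx) + \frac{E_{j^*+1}(t)}{t},
\end{equation*}
and the integrand is linear so, applying Lemma \ref{lm3.0} via the uniform bound \eqref{e1-lm3.2}, the right-hand side converges a.s.\ to $-\tilde a_{j^*+1,0}+a_{j^*+1,j^*}x_{j^*}^{(j^*)} = \I_{j^*+1}<0$ (using $\E_{\pi^{(j^*)}} X_{j^*+2}=0$ from Lemma \ref{lm3.2}). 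This gives $\lim_{t\to\infty}\ln X_{j^*+1}(t)/t = \I_{j^*+1}$ a.s.\ and in particular exponential extinction of $X_{j^*+1}$. For $k>j^*+1$, a straightforward induction on $k$ using the same Itô expansion, together with the already-established exponential decay of $X_{k-1}$ and non-negativity of $X_{k+1}$, yields $\lim_{t\to\infty}\ln X_k(t)/t = -\tilde a_{k0}$ a.s.

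The main obstacle is the a.s.\ identification of weak limits of $\tilde\Pi_t$ in the second paragraph. In dimension $n=2$ (part (iii)) the scalar comparison with the logistic equation \eqref{sde:tilde1} provided an explicit a.s.\ bound on $t^{-1}\int_0^t X_1(s)\,ds$; no such scalar comparison governs the full interior dynamics of the length-$j^*$ chain. One therefore needs either (a) the asymptotic strong Feller property together with irreducibility on $\R_+^{(j^*),\circ}$ (Hörmander's condition is supplied by positive definiteness of $\Sigma$) to apply a strong-ergodic-theorem argument in the spirit of Benaïm or (b) a direct Khasminskii-type averaging via stopping times that control the excursions of $X_{j^*+1}$ near $0$, combined with a uniform-in-initial-condition ergodic estimate for the $j^*$-dimensional subsystem. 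This last ingredient, uniform ergodicity under degenerate perturbations from the upper predators, is the delicate technical point that keeps the result at the level of a conjecture.
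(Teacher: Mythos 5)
The statement you are trying to prove is explicitly labelled a \emph{conjecture} in the paper. The authors do not prove it; they remark that they can only establish it under the extra assumption of strictly positive intraspecies predator competition, with details deferred to a follow-up paper (\cite{HN17b}). So there is no paper proof to compare against, and any assessment must be made on the internal merits of your sketch.

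Your overall architecture is the right one: apply Theorem \ref{t:stoc}(i) to the invariant face $\R_+^{(j^*),\circ}$ using positive definiteness of the top-left block of $\Sigma$ to get the unique $\pi^{(j^*)}$; use Lemma \ref{lm4.1} to convert $\tilde\kappa(j^*+1)<0$ into $\I_{j^*+1}<0$; try to identify the a.s.\ weak limit of $\tilde\Pi_t$ with $\pi^{(j^*)}$; and then run an It\^o-formula cascade to peel off the exponential decay rates $\I_{j^*+1}$ and $-\tilde a_{k0}$ for $k>j^*+1$. You are also honest that the second step — a.s.\ identification of the random occupation measures — is not actually accomplished, and you correctly locate the reason: the $n=2$ proof of part (iii) leans on a scalar comparison with the one-dimensional logistic SDE \eqref{sde:tilde1} to get an a.s.\ control on $t^{-1}\int_0^t X_1(s)\,ds$, and no analogous scalar comparison controls the interior dynamics of the $j^*$-chain. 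That is precisely where the paper leaves the problem.

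One additional point that your write-up glosses over: you invoke ``the bound $\limsup_{t\to\infty}\ln X_{i+1}(t)/t\leq 0$ that follows from $\E_\bx\|\BX(t)\|^{1+p}$ being uniformly bounded.'' This inference is not automatic. The paper's moment estimate \eqref{e1-lm3.2} yields, as in \eqref{e2-thm1.3}, only the in-expectation bound $\limsup_t\E_\bx\ln X_i(t)/t\leq 0$. Passing from a uniform moment bound at fixed $t$ to an a.s.\ pathwise exponential bound requires at least a Borel--Cantelli argument along a deterministic time grid together with a supremum-over-unit-intervals (maximal) estimate, and the latter is nontrivial here because the drift of $\ln X_i$ for $i\geq 2$ contains the unbounded term $a_{i,i-1}X_{i-1}$. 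For $X_1$ the comparison with \eqref{sde:tilde1} does give the a.s.\ bound, but for $X_i$, $i\geq 2$, no such one-sided comparison is available without intraspecies competition. This is exactly the ``uniform ergodicity under degenerate perturbations'' ingredient you flag at the end, and it is also why adding intraspecies competition (as in \cite{HN17b}) unlocks the result: it supplies the missing dissipation for each coordinate. In short, your proposal is a sound road map that correctly isolates the open technical step, but it is not a proof, in agreement with the paper's own designation of the statement as a conjecture.
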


We are able to prove this conjecture when there exists strictly positive intraspecies competition among the predators. These results will appear in the follow-up paper \cite{HN17b}.

\cite{GH79, MHP14} are able to analyze the system
\begin{equation}\label{e:det2}
\begin{split}
dx_1(t) &= x_1(t)(a_{10} - a_{12}x_2(t))\,dt\\
dx_2(t) &= x_2(t)(-a_{20} + a_{21}x_1(t) - a_{23}x_3(t))\,dt\\
&\mathrel{\makebox[\widthof{=}]{\vdots}} \\
dx_{n-1}(t) &= x_{n-1}(t)(-a_{n-1,0}+a_{n-1,n-2}x_{n-2}(t) - a_{n-1,n}x_n)\,dt\\
dx_n(t) &= x_n(t)(-a_{n0} + a_{n,n-1}x_{n-1}(t))\,dt.
\end{split}
\end{equation}
Note that this is exactly \eqref{e:det} with $a_{11}=0$. The prey species grows exponentially in the absence of predators. The persistence and extinction of species can still be categorized in the deterministic model \eqref{e:det2} (see \cite[Theorem 5]{GH79}). However, our methods do not work in the stochastic generalization of this setting. We must assume in our proofs that $a_{11}>0$. This assumption is natural from a biological point of view since resources are limited so the prey should not be able to grow without bounds on its own. For results when $a_{11}=0$, in the case when one uses telegraph noise instead of white noise, the reader is referred to \cite{Sato06} where the authors prove a surprising result: switching between two deterministic two-dimensional predator-prey systems does not lead to persistence or extinction. We expect the stochastic version of \eqref{e:det2} with white noise to exhibit similar strange properties.

In ecology there has been an increased interest in the \textit{spatial synchrony} that appears in population dynamics. This refers to the changes in the time-dependent characteristics (i.e. abundances etc) of  populations. One of the mechanisms which creates synchrony is the dependence of the population dynamics on a synchronous random environmental factor such as temperature or rainfall. The synchronizing effect of environmental stochasticity, or the so-called \textit{Moran effect}, has been observed in multiple population models. Usually this effect is the result of random but correlated weather effects acting on populations. For many biotic and abiotic factors, like population density, temperature or growth rate, values at close locations are usually similar. We refer the reader interested in an in-depth analysis of spatial synchrony to \cite{K00, LKB04}.
Most stochastic differential equations models appearing in the population dynamics literature treat only the case when the noise is non-degenerate (although see \cite{R03, DNDY16}). Although this significantly simplifies the technical proofs, from a biological point of view it is not clear that the noise should not be degenerate. For example, if one models a system with multiple populations then all populations can be influenced by the same factors (a disease, changes in temperature and sunlight etc). Environmental factors can intrinsically create spatial correlations and as such it makes sense to study how these degenerate systems compare to the non-degenerate ones. In our setting the noise affecting the different species could be strongly correlated. Actually, in some cases it could be more realistic to have the same one-dimensional Brownian motion $(B_t)_{t\geq 0}$ driving the dynamics of all the interacting species. Therefore, we chose to present a full analysis of the degenerate setting.

\subsubsection {Future work} There are many possible directions for extending our results or adapting them to different settings. One natural generalization would be to work with more general food chains, not necessarily of Lotka-Volterra type, that have been studied in the deterministic setting (see \cite{G80, FS85}) and add environmental fluctuations. We expect that the newly developed methods from \cite{B14} and \cite{HN16} will be key when trying to prove results about persistence and extinction of populations in stochastic environments.

A different problem would be the one where environmental fluctuations are modelled by telegraph noise instead of white noise. This would make our system a piecewise deterministic Markov process (PDMP). These processes have recently been studied in a biological context and have offered new insight regarding the competitive exclusion principle (see \cite{BL16}) and the long term behavior of predator-prey communities (see \cite{M16}).

One can expect that the functional response of some predator species changes according to the seasonally varying prey availability. A first result in this direction has appeared in \cite{TL16}. However, in \cite{TL16} the authors assume that the lengths of the seasons are constant whereas a more intuitive assumption would be that they are random. We expect to explore this direction in future work.

Our model does not account for population structure. Examples of structured populations can be found by looking at a population in which individuals can live in one of $n$ patches (e.g. fish swimming between basins of a lake or butterflies dispersing between meadows).  Dispersion is viewed by many population biologists as an important mechanism for survival. Not only does dispersion allow individuals to escape unfavorable landscapes (due to environmental changes or lack of resources), it also facilitates populations to smooth out local spatio-temporal environmental changes. Lotka-Volterra systems with dispersion in the deterministic setting have been studied in \cite{H78}. It would be interesting generalize this to a stochastic setting.

\bibliographystyle{amsalpha}
\bibliography{LV}

\end{document}